\newif\ifpersonal
\theoremstyle{plain}
\newtheorem{thm}{Theorem}[section]
\newtheorem{lem}[thm]{Lemma}
\newtheorem{prop}[thm]{Proposition}
\newtheorem{cor}[thm]{Corollary}
\theoremstyle{definition}
\newtheorem{defin}[thm]{Definition}
\theoremstyle{remark}
\newtheorem{eg}[thm]{Example}
\newtheorem{rem}[thm]{Remark}
\numberwithin{equation}{section}
\newcommand*{\personal}[1]{\textcolor[rgb]{0,0,1}{(Personal: #1)}}
\newcommand*{\todo}[1]{\textcolor{red}{(Todo: #1)}}
\newcommand*{\personal}[1]{\ignorespaces}
\newcommand*{\todo}[1]{\ignorespaces}
\newcommand{\C}{\mathbb C}
\newcommand{\R}{\mathbb R}
\newcommand{\Z}{\mathbb Z}
\newcommand{\fC}{\mathfrak C}
\newcommand{\fX}{\mathfrak X}
\newcommand{\ff}{\mathfrak f}
\newcommand{\cF}{\mathcal F}
\newcommand{\cM}{\mathcal M}
\newcommand{\cO}{\mathcal O}
\newcommand{\cX}{\mathcal X}
\DeclareFontFamily{U}{BOONDOX-calo}{\skewchar\font=45 }
\DeclareFontShape{U}{BOONDOX-calo}{m}{n}{<-> s*[1.05] BOONDOX-r-calo}{}
\DeclareFontShape{U}{BOONDOX-calo}{b}{n}{<-> s*[1.05] BOONDOX-b-calo}{}
\DeclareMathAlphabet{\mathcalboondox}{U}{BOONDOX-calo}{m}{n}
\newcommand{\bbG}{\mathbb G}
\newcommand{\bbP}{\mathbb P}
\newcommand{\be}{\mathbf e}
\newcommand{\sX}{\mathscr X}
\newcommand{\sU}{\mathscr U}
\let\save@mathaccent\mathaccent
\newcommand*\if@single[3]{%
	\setbox0\hbox{${\mathaccent"0362{#1}}^H$}%
	\setbox2\hbox{${\mathaccent"0362{\kern0pt#1}}^H$}%
	\ifdim\ht0=\ht2 #3\else #2\fi
}
\newcommand*\rel@kern[1]{\kern#1\dimexpr\macc@kerna}
\newcommand*\widebar[1]{\@ifnextchar^{{\wide@bar{#1}{0}}}{\wide@bar{#1}{1}}}
\newcommand*\wide@bar[2]{\if@single{#1}{\wide@bar@{#1}{#2}{1}}{\wide@bar@{#1}{#2}{2}}}
\newcommand*\wide@bar@[3]{%
	\begingroup
	\def\mathaccent##1##2{%
		\let\mathaccent\save@mathaccent
		\if#32 \let\macc@nucleus\first@char \fi
		\setbox\z@\hbox{$\macc@style{\macc@nucleus}_{}$}%
		\setbox\tw@\hbox{$\macc@style{\macc@nucleus}{}_{}$}%
		\dimen@\wd\tw@
		\advance\dimen@-\wd\z@
		\divide\dimen@ 3
		\@tempdima\wd\tw@
		\advance\@tempdima-\scriptspace
		\divide\@tempdima 10
		\advance\dimen@-\@tempdima
		\ifdim\dimen@>\z@ \dimen@0pt\fi
		\rel@kern{0.6}\kern-\dimen@
		\if#31
		\overline{\rel@kern{-0.6}\kern\dimen@\macc@nucleus\rel@kern{0.4}\kern\dimen@}%
		\advance\dimen@0.4\dimexpr\macc@kerna
		\let\final@kern#2%
		\ifdim\dimen@<\z@ \let\final@kern1\fi
		\if\final@kern1 \kern-\dimen@\fi
		\else
		\overline{\rel@kern{-0.6}\kern\dimen@#1}%
		\fi
	}%
	\macc@depth\@ne
	\let\math@bgroup\@empty \let\math@egroup\macc@set@skewchar
	\mathsurround\z@ \frozen@everymath{\mathgroup\macc@group\relax}%
	\macc@set@skewchar\relax
	\let\mathaccentV\macc@nested@a
	\if#31
	\macc@nested@a\relax111{#1}%
	\else
	\def\gobble@till@marker##1\endmarker{}%
	\futurelet\first@char\gobble@till@marker#1\endmarker
	\ifcat\noexpand\first@char A\else
	\def\first@char{}%
	\fi
	\macc@nested@a\relax111{\first@char}%
	\fi
	\endgroup
}
\newcommand{\oD}{\widebar D}
\newcommand{\oU}{\widebar U}
\newcommand{\oY}{\widebar Y}
\newcommand{\ov}{\widebar v}
\newcommand{\ox}{\widebar x}
\newcommand{\oy}{\widebar y}
\newcommand{\hh}{\widehat h}
\newcommand{\hC}{\widehat C}
\newcommand{\hE}{\widehat E}
\newcommand{\hF}{\widehat F}
\newcommand{\hL}{\widehat L}
\newcommand{\hU}{\widehat U}
\newcommand{\hbeta}{\widehat\beta}
\newcommand{\hGamma}{\widehat\Gamma}
\newcommand{\tW}{\widetilde W}
\newcommand{\tk}{\tilde k}
\newcommand{\tB}{\widetilde B}
\newcommand{\tD}{\widetilde D}
\newcommand{\tX}{\widetilde X}
\newcommand{\tfX}{\widetilde{\fX}}
\newcommand{\tsX}{\widetilde{\sX}}
\newcommand{\tH}{\widetilde H}
\newcommand{\tY}{\widetilde Y}
\newcommand{\tbeta}{\widetilde{\beta}}
\newcommand{\ttau}{\widetilde{\tau}}
\newcommand{\Ih}{I^\mathrm{h}}
\newcommand{\Iv}{I^\mathrm{v}}
\newcommand{\SsXH}{S_{(\sX,H)}}
\newcommand{\CsXH}{C_{(\sX,H)}}
\newcommand{\sXe}{\sX_\eta}
\newcommand{\sXs}{\sX_s}
\newcommand{\bcM}{\widebar{\mathcal M}}
\newcommand{\bcMgn}{\widebar{\mathcal M}_{g,n}}
\newcommand{\bcMon}{\widebar{\mathcal M}_{0,n}}
\newcommand{\Gmk}{\mathbb G_{\mathrm m/k}}
\newcommand{\Gmknan}{(\Gmk^n)\an}
\newcommand{\kc}{k^\circ}
\newcommand{\llb}{[\![}
\newcommand{\rrb}{]\!]}
\newcommand{\llp}{(\!(}
\newcommand{\rrp}{)\!)}
\newcommand{\an}{^\mathrm{an}}
\newcommand{\alg}{^\mathrm{alg}}
\newcommand{\ev}{\mathit{ev}}
\newcommand{\inv}{^{-1}}
\newcommand{\id}{\mathrm{id}}
\newcommand{\kanal}{$k$-analytic\xspace}
\newcommand{\red}{^\mathrm{red}}
\renewcommand{\th}{^\mathrm{\tiny th}}
\newcommand{\Zaffine}{$\mathbb Z$-affine\xspace}
\newcommand{\sw}{^\mathrm{sw}}
\tikzset{
  closed/.style = {decoration = {markings, mark = at position 0.5 with { \node[transform shape, xscale = .8, yscale=.4] {/}; } }, postaction = {decorate} },
  open/.style = {decoration = {markings, mark = at position 0.5 with { \node[transform shape, scale = .7] {$\circ$}; } }, postaction = {decorate} }
}
\DeclareMathOperator{\Int}{Int}
\DeclareMathOperator{\NE}{NE}
\DeclareMathOperator{\oStar}{\widebar{\Star}}
\DeclareMathOperator{\Spec}{Spec}
\DeclareMathOperator{\Spf}{Spf}
\DeclareMathOperator{\Star}{Star}
\DeclareMathOperator{\val}{val}
\begin{document}
\title[Enumeration of holomorphic cylinders. I]{Enumeration of holomorphic cylinders\\ in log Calabi-Yau surfaces. I}
\author{Tony Yue YU}
\address{Tony Yue YU, Institut de Mathématiques de Jussieu - Paris Rive Gauche, CNRS-UMR 7586, Case 7012, Université Paris Diderot - Paris 7, Bâtiment Sophie Germain 75205 Paris Cedex 13 France}
\email{yuyuetony@gmail.com}
\date{April 7, 2015 (revised on August 23, 2016)}
\subjclass[2010]{Primary 14N35; Secondary 14J32 14J26 14T05 14G22}
\keywords{cylinder, broken line, wall-crossing, enumerative geometry, non-archimedean geometry, Berkovich space, log Calabi-Yau, del Pezzo surface}

\begin{abstract}
We define the counting of holomorphic cylinders in log Calabi-Yau surfaces.
Although we start with a complex log Calabi-Yau surface, the counting is achieved by applying methods from non-archimedean geometry.
This gives rise to new geometric invariants.
Moreover, we prove that the counting satisfies a property of symmetry.
Explicit calculations are given for a del Pezzo surface in detail,
which verify the conjectured wall-crossing formula for the focus-focus singularity.
Our holomorphic cylinders are expected to give a geometric understanding of the combinatorial notion of broken line by Gross, Hacking, Keel and Siebert.
Our tools include Berkovich spaces, tropical geometry, Gromov-Witten theory and the GAGA theorem for non-archimedean analytic stacks.
\end{abstract}

\maketitle

\personal{Personal comments are shown!}

\tableofcontents

\section{Introduction}\label{sec:intro_log-CY}

In mirror symmetry, the enumeration of holomorphic discs is of great importance.
Holomorphic discs are the building blocks of the Fukaya category \cite{Fukaya_Morse_homotopy_1993,Kontsevich_Homological_algebra_1995,Fukaya_Lagrangian_intersection_2009}.
Holomorphic discs also play the role of ``quantum corrections'' in the reconstruction of mirror manifolds \cite{Fukaya_Multivalued_Morse_2005,Kontsevich_Affine_2006,Auroux_Mirror_symmetry_anticanonical_2007,Fukaya_Cyclic_symmetry_2010,Tu_On_the_reconstruction_2014}.
More precisely, the ``quantum corrections'' arise from counting holomorphic discs with boundaries on torus fibers of an SYZ fibration (Strominger-Yau-Zaslow \cite{SYZ_1996}).

It turns out to be insufficient to restrict to holomorphic discs.
One can enrich the geometry by counting not only discs but also more general Riemann surfaces with boundaries on torus fibers of an SYZ fibration.
In this paper, we study a special case: the counting of \emph{holomorphic cylinders} in log Calabi-Yau surfaces.
The general case would require much more foundational efforts.

Our considerations are very much motivated by the work of Gross-Hacking-Keel \cite{Gross_Mirror_Log_2011}\footnote{In this paper, we will always refer to the first arXiv version \cite{Gross_Mirror_Log_2011}, because it contains much more material than the published version \cite{Gross_Mirror_Log_published}.}.
A special case of our holomorphic cylinders is related to the remarkable notion of broken line in loc.\ cit.
Broken lines are combinatorial objects which are responsible for the construction of the Landau-Ginzburg potential and the theta functions on the mirror manifold.
It is developed by Gross, Hacking, Keel, Siebert and their coauthors in a series of papers \cite{Gross_Mirror_symmetry_for_P2_2010,Carl_A_tropical_view_2010,Gross_Mirror_Log_2011,Gross_Theta_2012,Gross_Theta_2015} (also suggested by Abouzaid, Kontsevich and Soibelman).
Our holomorphic cylinders, besides their own interest, are expected to give a geometric interpretation of the broken lines, as well as a better understanding of the canonical theta functions in mirror symmetry.

Different from the existing literatures, we will work in the framework of non-archimedean analytic geometry à la Berkovich \cite{Berkovich_Spectral_1990,Berkovich_Etale_1993} instead of differential geometry.
The reason is that it is easier to apply techniques from tropical geometry in the non-archimedean setting.
It is helpful to think of the non-archimedean picture as the most degenerate differential-geometric picture.
The total degeneracy makes many constructions and proofs more transparent.
The non-archimedean approach to mirror symmetry was first suggested by Kontsevich and Soibelman in \cite{Kontsevich_Homological_2001}, where it is expected that the differential-geometric picture and the non-archimedean picture should be equivalent.

\bigskip
Now let us explain our paper in precise mathematical terms.

We start with a Looijenga pair\footnote{The terminology is borrowed from \cite{Gross_Mirror_Log_2011}. The notion was extensively studied by Looijenga \cite{Looijenga_Rational_1981}.} $(Y,D)$, i.e.\ a connected smooth complex projective surface $Y$ together with a singular nodal curve $D$ representing the anti-canonical class $-K_Y$.
Let $k\coloneqq\C\llp t\rrp$ be the field of formal Laurent series.
Let $X\coloneqq Y\setminus D$.
Let $X_k\coloneqq X\times_{\Spec\C}\Spec k$ be the base change and $X\an_k$ the non-archimedean analytification.
Using a variant of Berkovich's deformation retraction \cite{Berkovich_Smooth_1999}, we construct a proper continuous map $\tau\colon X\an_k\to B$, where $B$ is a topological space homeomorphic to $\R^2$.
The map $\tau$ is a non-archimedean avatar of the SYZ fibration in differential geometry (see \cref{prop:extension_of_torus_fibration}).
The generic fibers of $\tau$ are non-archimedean affinoid tori.

Our goal in this paper is to define the counting of holomorphic cylinders in $X\an_k$ with boundaries on two different torus fibers of $\tau\colon X\an_k\to B$.
Ideally, we would like to consider the moduli space of such holomorphic cylinders, and then define the counting using this moduli space.
However, in general, a holomorphic cylinder in $X\an_k$ can be very wild and complicated.
Therefore, we impose a regularity condition on the boundaries of the cylinders: we require that when we make analytic continuation at the boundaries, our cylinders extend straight to infinity.
This regularity condition helps us relate the counting of cylinders to the counting of certain closed curves in $Y\an_k$, i.e. certain Gromov-Witten type invariants.
Finally, we achieve the counting via a mixture of Gromov-Witten theory, non-archimedean geometry and tropical geometry.

The counting of holomorphic cylinders is expected to satisfy a list of nice properties.
We prove in this paper one non-trivial fundamental property called the \emph{symmetry property} (cf.\ \cref{thm:symmetry}).
This ensures that our non-archimedean construction is compatible with the intuition from symplectic geometry.
Here is a heuristic explanation of the symmetry property:
Let $F_1$ and $F_2$ be two different torus fibers of the map $\tau\colon X\an_k\to B$.
The actual construction of our counting invariants depends on the orientation of the cylinders.
In other words, we have the number of holomorphic cylinders going from $F_1$ to $F_2$, and the number of holomorphic cylinders going from $F_2$ to $F_1$.
The symmetry property states that the two numbers are equal.
We will explore other properties of our counting invariants besides the symmetry property in subsequent works.

Since the counting of holomorphic cylinders is constructed in a rather indirect way,
we give a concrete computation for a del Pezzo surface in the end of this paper.
We prove that the corresponding numbers of cylinders are certain binomial coefficients.
Our computation verifies the Kontsevich-Soibelman wall-crossing formula for the focus-focus singularity (cf.\ \cite{Kontsevich_Affine_2006}).

\bigskip

Here is the outline of this paper:

In \cref{sec:review_tropicalizations}, we start with a review of tropicalization, toroidal modification and integral affine structure.
In \cref{sec:log-CY_surfaces}, we study the non-archimedean SYZ fibration of a log Calabi-Yau surface.
In \cref{sec:tropical_cylinders}, we introduce several combinatorial constructions on the base $B$: spines, tropical cylinders, extension of spines and extension of tropical cylinders.
Moreover, we prove a rigidity property of tropical cylinders (\cref{prop:rigidity_of_tropical_cylinder}), which will be an important ingredient in the proof of the properness of the moduli space in \cref{sec:hol_cylinders}.

In \cref{sec:hol_cylinders}, we define our counting invariant: the \emph{number $N(L,\beta)$ of homomorphic cylinders} in $X\an_k$, given a spine $L$ in $B$ and a curve class $\beta$ in $Y$.
By imposing the regularity condition as explained above, we relate the number of cylinders to certain Gromov-Witten type invariants.
We use non-archimedean geometry here to cut out relevant components of the moduli space of stable maps.
Finally, thanks to the GAGA theorem for non-archimedean analytic stacks proved in \cite{Porta_Yu_Higher_analytic_stacks_2014},
we are able to resort to the virtual fundamental classes in algebraic geometry and achieve the enumeration.

In \cref{sec:symmetry}, we prove the symmetry property of the counting of holomorphic cylinders.
In \cref{sec:example_del_Pezzo}, we carry out explicit computations of our counting invariants for a del Pezzo surface in detail.

\bigskip

\paragraph{\bfseries Related works}
Our previous works on non-archimedean geometry and tropical geometry \cite{Yu_Balancing_2013,Yu_Gromov_2014,Yu_Number_2013,Yu_Tropicalization_2014,Porta_Yu_Higher_analytic_stacks_2014} provide general foundations for the context of this paper.
We will refer to \cite[\S 6]{Yu_Gromov_2014} and \cite{Porta_Yu_Higher_analytic_stacks_2014} for the theory of stacks in non-archimedean analytic geometry.

As mentioned in the beginning, we are very much inspired by the notion of broken line in mirror symmetry.
The collection of broken lines is believed to be a more fundamental object than the scattering diagram (cf.\ \cite[Remark 0.21]{Gross_Mirror_Log_2011}).
We refer to \cite{Kontsevich_Affine_2006,Gross_Tropical_vertex_2010,Gross_Real_Affine_2011} for the notion of scattering diagram.
However, the definition of broken line in \cite{Gross_Mirror_Log_2011} is based on the scattering diagram and is combinatoric in nature.
It is expected that our consideration on holomorphic cylinders leads to a precise geometric understanding of the broken lines, which does not à priori refer to scattering diagrams.
We will explore this direction in subsequent works.

We would also like to mention that the works by Auroux \cite{Auroux_Mirror_symmetry_anticanonical_2007,Auroux_Special_Lagrangian_fibrations_2009}, Nishinou \cite{Nishinou_Disk_2012} and Lin \cite{Lin_Open_GW_2014} on the enumeration of holomorphic discs are in a similar spirit of this paper.

\medskip

\paragraph{\textbf{Acknowledgments}}
I am very grateful to Maxim Kontsevich for suggesting this direction of research and sharing with me many fruitful ideas.
Special thanks to Antoine Chambert-Loir for continuous support.
During revision, Sean Keel suggested me a better way to deal with the curve classes.
I am equally grateful to Luis Alvarez-Consul, Denis Auroux, Vladimir Berkovich, Benoît Bertrand, Philip Boalch, Olivier Debarre, Lie Fu, Mark Gross, Ilia Itenberg, Mattias Jonsson, François Loeser, Ernesto Lupercio, Grigory Mikhalkin, Johannes Nicaise, Johannes Rau, Yan Soibelman, Jake Solomon, Michael Temkin and Bertrand Toën for their helpful comments, and for providing me opportunities to present this work in various seminars and conferences.

\section{Tropicalization and integral affine structures} \label{sec:review_tropicalizations}

First, we describe a general setup of tropicalization using snc pairs (see also \cite{Berkovich_Smooth_1999,Kontsevich_Non-archimedean_2002,Thuillier_Geometrie_toroidale_2007,Boucksom_Singular_2011,Gubler_Skeletons_2014,Yu_Tropicalization_2014}).
Then we recall the relation between toroidal blowups of formal models and polyhedral subdivisions of the tropicalization following \cite{Kempf_Toroidal_1973}.
Finally we review integral affine structures.

Let $k=\C \llp t \rrp$ be the field of formal Laurent series.
It has the structure of a complete discrete valuation field with $t$ being a uniformizer.
Let $k^\circ=\C\llb t \rrb$ denote the ring of integers, $\widetilde k=\C$ the residue field, and $\val\colon k^\times \to \Z$ the valuation map.


For a $\kc$-scheme $\sX$, we denote by $\sXe$ its generic fiber over $k$, and by $\sXs$ its special fiber over the residue field $\tk$.
For a scheme $X$ locally of finite type over $k$, we denote by $X\an$ the analytification of $X$ (cf.\ \cite{Berkovich_Spectral_1990}).

\begin{defin}\label{def:snc_pair}
An \emph{snc pair} $(\sX,H)$ consists of a proper flat $\kc$-scheme and a finite sum $H=\sum_{i\in\Ih} D_i$ of distinguished effective Cartier divisors on $\sX$ such that
\begin{enumerate}[(i)]
\item every $D_i$ has irreducible support,
\item every point of $\sX$ has an open affine neighborhood $\sU$ which admits an étale morphism
\begin{equation}\label{eq:local_coordinates}
\phi\colon\sU\longrightarrow \Spec \left(k^\circ[T_0,\dots,T_n]/(T_0^{m_0}\cdots T_{d}^{m_d}-\varpi)\right)
\end{equation}
for some $0\le d\le n$, $m_0,\dots,m_d\in\Z_{>0}$ and a uniformizer $\varpi$ of $k$,
and that for every $i\in \Ih$ the restriction $D_i|_{\sU}$ is either empty or defined by $\phi^*(T_j)$ for some $j>d$.
\end{enumerate}
\end{defin}

\begin{rem}
\cref{def:snc_pair} is a variant of \cite[Definition 3.1]{Gubler_Skeletons_2014} by allowing multiplicities.
We remark that the notion of formal strictly semistable pair in Definition 3.7 loc.\ cit.\ can also be generalized to the notion of \emph{formal snc pair} in the same way.
Nevertheless, (algebraic) snc pairs are more convenient for this paper.
\end{rem}

Let $(\sX,H)$ be an snc pair.
We denote by $\Set{D_i}_{i\in\Iv}$ the finite set of irreducible components of the special fiber $\sXs\red$ with its reduced scheme structure.
For every $i\in\Iv$, let $\mathit{mult}_i$ denote the multiplicity of $D_i$ in $\sXs$.
The divisors in the set $\Ih$ are called \emph{horizontal divisors}, while the divisors in the set $\Iv$ are called \emph{vertical divisors}.
For every non-empty subset $I\subset \Iv\cup \Ih$, put $D_I\coloneqq\cap_{i\in I} D_i$,
$D_I^\circ\coloneqq D_I\setminus\big(\bigcup_{i\in (\Iv\cup\Ih)\setminus I} D_i\big)$.
We further assume that every $D_I$ is either empty or irreducible.

\begin{defin}[cf.\ \cite{Kempf_Toroidal_1973,Kontsevich_Non-archimedean_2002}]
We define the \emph{Clemens cone} and the \emph{Clemens polytope} associated to an snc pair $(\sX,H)$ to be respectively
\begin{align*}
\CsXH &= \bigg\{\sum_{i\in \Iv\cup\Ih} a_i \langle D_i \rangle \ \bigg|\  a_i\ge 0, \bigcap_{i\,:\, a_i>0} D_i \neq \emptyset \bigg\} \subset \R^{\Iv\cup\Ih},\\
\SsXH &= \CsXH \cap \Set{\sum_{i\in \Iv} \mathit{mult}_i\cdot a_i=1}.
\end{align*}
\end{defin}

\begin{defin}\label{def:snc_log_model}
Let $\cX$ be an algebraic variety over $k$.
An \emph{snc log-model} of $\cX$ consists of an snc pair $(\sX,H)$ together with an isomorphism $X\simeq(\sX\setminus H)_\eta$.
\end{defin}

As in \cite{Berkovich_Smooth_1999,Thuillier_Geometrie_toroidale_2007,Nicaise_Essential_skeleton_2013,Gubler_Skeletons_2014}, one has a canonical embedding $\SsXH\hookrightarrow X\an$ and a canonical strong deformation retraction from $X\an$ to $\SsXH$.
In this paper, we will only be concerned with the retraction map at time one, which we denote by $\tau\colon X\an\to\SsXH$.
It has a simple description below.

Locally on the formal completion of $\sX$ along its special fiber, a divisor $D_i$ for $i\in \Iv\cup\Ih$ is given by a function $u_i$,
which is well-defined up to multiplication by invertible functions.
So $\val(u_i(x))$ defines a continuous function on $X\an$.
Then the map $\tau$ equals the following continuous map
\[X\an\longrightarrow\R^{\Iv\cup\Ih},\qquad x\longmapsto \big(\val (u_i(x))\big)_{i\in \Iv\cup\Ih},\]
whose image coincides with the Clemens polytope $\SsXH$.

\subsection*{Toroidal modifications}

We restrict to snc log-models for simplicity rather than for necessity.
A more general framework is to use toroidal log-models, in the sense that the pair $\big(\sX,\sXs\cup H\big)$ is étale locally isomorphic to a toric scheme over $\kc$ with its toric boundary (cf.\ \cite[\S 4.3]{Kempf_Toroidal_1973}, \cite[\S 7]{Gubler_Guide_2013}).
In the toroidal case, the Clemens cone $\CsXH$ would be the conical polyhedral complex defined in Chapter II \S 1 loc.\ cit.,
and the Clemens polytope would be an analog of the compact polyhedral complex in the end of Chapter II \S 3 loc.\ cit.

Given a finite rational polyhedral subdivision $\CsXH'$  of the Clemens cone $\CsXH$, Chapter II Theorems 6* and 8* in \cite{Kempf_Toroidal_1973} allow us to construct a new toroidal log-model $(\sX',H')$ of $X$ dominating the original one, such that
\[C_{(\sX',H')} \simeq C'_{(\sX,H)},\qquad S_{(\sX',H')} \simeq S'_{(\sX,H)},\]
where $\SsXH'$ denotes the subdivision of $\SsXH$ induced by the subdivision $\CsXH'$.

As we are mainly interested in snc log-models, we remark that the new log-model $(\sX',H')$ is an snc pair if and only if $\CsXH'$ is a finite rational subdivision of $\CsXH$ into simplicial cones whose integer points can be generated by a subset of a basis of the lattice $\Z^{\Iv\cup\Ih}$ (cf.\ \cite[Chapter II Theorem 4*]{Kempf_Toroidal_1973}).
We call such subdivisions \emph{regular simplicial subdivisions}.

\subsection*{Integral affine structures}

\begin{defin}\label{def:integral_affine_structure}
An $n$-dimensional chart on a paracompact Hausdorff topological space $B$ is a pair $(U,\phi)$,
where $U$ is a open subset of $B$, and $\phi\colon U\to \R^n$ is a homeomorphism of $U$ onto an open subset of a convex polyhedron in $\R^n$ not contained in a hyperplane.
An ($n$-dimensional) \emph{integral affine structure} (\emph{\Zaffine structure} for short) on $B$ is a maximal collection of $n$-dimensional charts such that the transitions functions belong to the group $\mathrm{GL}(n, \Z) \ltimes \R^n$ of integral affine transformations of $\R^n$.
\end{defin}

\begin{rem}
	When $n=1$, choosing a \Zaffine structure is the same as choosing a metric.
\end{rem}

\cref{def:integral_affine_structure} is an extension of \cite[\S 2.1]{Kontsevich_Affine_2006} to manifolds with corners.

A real-valued function $f$ on $\R^n$ is said to be \emph{integral affine} if it has the form
\[f(x_1,\dots,x_n)=a_1 x_1+\dots+a_n x_n+b\]
where $a_1,\dots,a_n\in\Z$ and $b\in\R$.
Let $\Delta$ be a convex polyhedron in $\R^n$ not contained in a hyperplane.
We denote by $\mathrm{Aff}_{\Z,\Delta}$ the sheaf of functions on $\Delta$ which are locally integral affine.

Since a homeomorphism between two open domains in $\R^n$ preserving the sheaf $\mathrm{Aff}_{\Z,\R^n}$ is necessarily given by an integral affine transformation,
an integral affine structure on a paracompact Hausdorff topological space $B$ can be given equivalently as a subsheaf $\mathrm{Aff}_{\Z,B}$ of the sheaf of continuous functions on $B$ such that the pair $(B,\mathrm{Aff}_{\Z,B})$ is locally isomorphic to $(\Delta,\mathrm{Aff}_{\Z,\Delta})$.

\begin{defin}
A \emph{piecewise \Zaffine structure} on a polyhedral complex $\Sigma$ consists of an \Zaffine structure for each polyhedron in $\Sigma$, such that if $\sigma$ is a face of $\tau$, then the restriction of the \Zaffine structure on $\tau$ to $\sigma$ equals the \Zaffine structure on $\sigma$, in the sense that $\mathrm{Aff}_{\Z,\tau}|_\sigma\simeq\mathrm{Aff}_{\Z,\sigma}$.
\end{defin}

\begin{eg}
The embeddings of the Clemens cone $\CsXH$ and the Clemens polytope $\SsXH$ into $\R^{\Iv\cup\Ih}$ induce naturally piecewise \Zaffine structures on $\CsXH$ and $\SsXH$.
\end{eg}

\begin{defin}\label{def:polyhedral_Zaffine_manifold_with_sigularities}
A \emph{polyhedral \Zaffine manifold with singularities} consists of a polyhedral complex $\Sigma$ equipped with a piecewise \Zaffine structure, an open subset $\Sigma_0\subset\Sigma$ which is a manifold without boundary, and a \Zaffine structure $\mathrm{Aff}_{\Z,\Sigma_0}$ on $\Sigma_0$ compatible with the piecewise \Zaffine structure on $\Sigma$, in the sense that the restriction of $\mathrm{Aff}_{\Z,\Sigma_0}$ to the intersection between $\Sigma_0$ and each polyhedron in $\Sigma$ is isomorphic to the one given by the piecewise \Zaffine structure on $\Sigma$.
The points in the set $\Sigma_0$ are called \emph{smooth points} with respect to the \Zaffine structure,
while points in the set $\Sigma\setminus\Sigma_0$ are called \emph{singular points}.
\end{defin}

\section{Log Calabi-Yau surfaces} \label{sec:log-CY_surfaces}

\begin{defin}[cf.\ \cite{Gross_Mirror_Log_2011}]\label{def:Looijenga pair}
A \emph{Looijenga pair} $(Y,D)$ consists of a connected smooth complex projective surface $Y$ together with a singular nodal curve $D$ representing the anti-canonical class $-K_Y$.
\end{defin}

We note that the definition of Looijenga pair implies that $Y$ is a rational surface, and that $D$ is either an irreducible arithmetic genus one curve with a single node, or a cycle of smooth rational curves.
For simplicity, we will assume that $D$ is a cycle of at least three smooth rational curves.
This can always be achieved by blowing up the nodes, which does not change the geometry we study.
We order the irreducible components of $D$ cyclically, and write $D=D_1+\dots+D_l$.
We take indices modulo $l$.

Let $(Y,D)$ be a Looijenga pair.
Let $X=Y\setminus D$.
Let $Y_{\kc}=Y\times_{\Spec\C}\Spec\kc$, $D_{\kc}=D\times_{\Spec\C}\Spec\kc$, $Y_k=Y\times_{\Spec\C}\Spec k$, $D_k=D\times_{\Spec\C}\Spec k$, and $X_k=X\times_{\Spec\C}\Spec k$.
We have $(Y_{\kc}\setminus D_{\kc})_\eta \simeq X_k$.
So $(Y_{\kc}, D_{\kc})$ is an snc log-model of $X_k$ in the sense of \cref{def:snc_log_model}.

Let $(\be',\be_1,\be_2,\dots,\be_l)$ be the standard basis of $\R^{l+1}$
and let $(a',a_1,a_2,\dots,a_l)$ be the coordinates.

By definition, we have
\[C_{(Y_{\kc},D_{\kc})} = \bigcup_{i=1}^l\Set{a'\be'+a_i\be_i+a_{i+1}\be_{i+1} | a', a_i, a_{i+1}\ge 0} \subset \R^{l+1}.\]

We have an isomorphism $\R^{l+1}\cap\{a'=1\}\simeq\R^l$.
By abuse of notation, let $(\be_1,\dots,\be_l)$ and $(a_1,\dots,a_l)$ denote respectively the induced basis and the induced coordinates on $\R^l$.
We have
\begin{align*}
S_{(Y_{\kc},D_{\kc})} &= C_{(Y_{\kc},D_{\kc})} \cap \{a'=1\}\\
&\simeq \bigcup_{i=1}^l \Set{a_i\be_i+a_{i+1}\be_{i+1} | a_i,a_{i+1}\ge 0}\subset\R^l.
\end{align*}
We denote $B=S_{(Y_{\kc},D_{\kc})}$ for simplicity.
We call $B$ the \emph{tropical base} associated to the Looijenga pair $(Y,D)$.
We have the retraction map $\tau\colon X_k\an\to B$ constructed in \cref{sec:review_tropicalizations}.

Set
\begin{align*}
\Delta_i&\coloneqq\Set{a_i\be_i | a_i\ge 0}\subset B,\\
\Delta_{i,i+1}&\coloneqq\Set{a_i\be_i+a_{i+1}\be_{i+1}|a_i,a_{i+1}\ge 0}\subset B.
\end{align*}
Their relative interiors are denoted by $\Delta^\circ_i$ and $\Delta^\circ_{i,i+1}$.
We denote by $O\in B$ the point corresponding to the origin in $\R^l$.

Let $\val^n$ denote the map
\[\val^n\colon \Gmknan\to\R^n\qquad(x_1,\dots,x_n)\mapsto(\val x_1,\dots,\val x_n).\]

\begin{defin}
A continuous map from a \kanal space to a topological space is called an \emph{affinoid torus fibration} if locally on the target, it is of the form $(\val^n)\inv(U)\to U$ for some open subset $U\subset\R^n$.
\end{defin}

\begin{rem}
Affinoid torus fibrations are analogous to Lagrangian torus fibrations in symplectic geometry.
\end{rem}

By construction, the retraction map $\tau\colon X_k\an\to B$ is an affinoid torus fibration over $\bigcup_{i=1}^l\Delta_{i,i+1}^\circ$ (cf.\ \cite[\S 4.2]{Gubler_Skeletons_2014}).
The aim of the rest of this section is to extend this affinoid torus fibration over codimension one open strata $\Delta_i^\circ$.

Let $C$ denote the complex projective line $\bbP^1_\C$ with a chosen coordinate.
Let $T_n$ denote the total space of the line bundle $\cO(-n)$ on $C$ for any integer $n$.
Let $T_{n,0}$ and $T_{n,\infty}\subset T_n$ denote respectively the fibers at $0$ and $\infty$.
Let $(C_n,E_n,F_n)$ denote the formal completion of $(T_n, T_{n,0}, T_{n,\infty})$ along the zero section.
Let $p\in C\setminus\{0,\infty\}$.

Let $\widetilde C_n$ denote the blowup of $C_n$ at the point $p$.
Let $\hC_n$ denote the formal completion of $\widetilde C_n$ along the strict transform of $C$.
We have a natural morphism $\hC_n\to C_n$.
Let $\hE_n$ and $\hF_n$ denote respectively the pullback of the divisors $E_n$ and $F_n$.

\begin{lem}\label{lem:blowup}
The triple $(\hC_n, \hE_n, \hF_n)$ is isomorphic to the triple $(C_{n+1},E_{n+1},F_{n+1})$.
\end{lem}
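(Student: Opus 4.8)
The plan is to produce the isomorphism in explicit coordinates. Fix the convention that, for each $m$, the formal scheme $C_m$ is glued from $\Spf\C[z][[\xi]]$ over $C\setminus\{\infty\}$ and $\Spf\C[\zeta][[\eta]]$ over $C\setminus\{0\}$ along $\zeta=1/z$, $\eta=z^m\xi$, with zero section $\{\xi=0\}=\{\eta=0\}$ and with $E_m=\{z=0\}$, $F_m=\{\zeta=0\}$; in particular $C_n$ carries the two $\cO(-n)$-fibre coordinates $\xi$ and $\eta=z^n\xi$. Since $p$ lies on the zero section over some $z=c$ with $c\in\C\setminus\{0\}$, the blowup $\widetilde C_n\to C_n$ is an isomorphism over $C\setminus\{c\}$; in particular it meets neither $E_n$ nor $F_n$, so $\hE_n$ and $\hF_n$ are literally the same formal discs, now transverse to the strict transform $C'$ of the zero section. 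On the chart $C\setminus\{\infty\}$, blowing up the point $(z,\xi)=(c,0)$ of $\Spf\C[z][[\xi]]$ gives two charts; the one that carries $C'$, and not merely the exceptional curve, is the one in which $\xi=(z-c)u$, so that $C'=\{u=0\}$, the exceptional curve is $\{z=c\}$, and completing along $C'$ gives $\hC_n|_{C\setminus\{\infty\}}\cong\Spf\C[z][[u]]$. Over $C\setminus\{c\}$ the blowup changes nothing, so there $\hC_n\cong C_n$, with fibre coordinate $\eta=z^n\xi$ near $z=\infty$. As $C\setminus\{\infty\}$ and $C\setminus\{0,c\}$ cover $C$, these two descriptions cover $\hC_n$.

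Next I would write down the map $\hC_n\to C_{n+1}$: over $C\setminus\{\infty\}$ send $(z,u)\mapsto(z,\,\xi=u)$, and over $C\setminus\{0,c\}$ send $(\zeta,\eta)\mapsto\big(\zeta,\,\eta/(1-c\zeta)\big)$, where here $\eta=z^n\xi$ is the $\cO(-n)$-coordinate, the target coordinate of $C_{n+1}$ is $\eta/(1-c\zeta)$, and this is legitimate because $1-c\zeta=(z-c)/z$ is a unit on all of $C\setminus\{0,c\}$. To see that the two rules glue, observe that on the triple overlap $\xi=(z-c)u$ gives $\eta=z^n(z-c)u$, hence $\eta/(1-c\zeta)=z^n(z-c)u\cdot z/(z-c)=z^{n+1}u=z^{n+1}\xi$, which is exactly the transition function used to build $C_{n+1}$; and each of the two rules is visibly an isomorphism onto its image, so the glued morphism is an isomorphism $\hC_n\xrightarrow{\sim}C_{n+1}$. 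Finally the marked divisors match on the nose: $C'=\{u=0\}$ goes to the zero section of $C_{n+1}$, $\hE_n=\{z=0\}$ (in the chart $C\setminus\{\infty\}$) goes to $\{z=0\}=E_{n+1}$, and $\hF_n=\{\zeta=0\}$ (in the chart $C\setminus\{0,c\}$) goes to $\{\zeta=0\}=F_{n+1}$.

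The step that I expect to demand the most care is the coordinate bookkeeping around the blowup centre: one must use the cover $\{C\setminus\{\infty\},\,C\setminus\{0,c\}\}$ rather than the naive $\{C\setminus\{\infty\},\,C\setminus\{0\}\}$, since the blowup is nontrivial precisely over $z=c\in C\setminus\{0\}$, and one must single out the blowup chart carrying $C'$ (the one with $\xi=(z-c)u$, not the one with $z-c=t\xi$). The unit $1-c\zeta=(z-c)/z$ entering the transition is exactly the factor that upgrades $\cO(-n)$ to $\cO(-n-1)$. Alternatively, and more conceptually: $C'\cong C$, and $N_{C'/\widetilde C_n}=N_{C/C_n}\otimes\cO_{C}(-p)=\cO(-n)\otimes\cO(-1)=\cO(-n-1)$ because blowing up a point of a smooth curve on a smooth surface lowers its self-intersection by one; then $\hC_n$ is the formal total space over $C'\cong\bbP^1$ of this line bundle, with $\hE_n,\hF_n$ the fibres over $0$ and $\infty$ — for $n\ge 0$ because the relevant linearisation obstructions lie in groups $H^1(\bbP^1,\cO(m))$ with $m\ge 0$, which vanish; in general by identifying the blow-up followed by blow-down of the fibre through $p$ with an elementary transformation of the Hirzebruch compactification of $T_n$ — and that is precisely the defining description of $(C_{n+1},E_{n+1},F_{n+1})$.
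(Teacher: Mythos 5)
Your proof is correct and follows essentially the same route as the paper: an explicit coordinate computation in which the fibre coordinate near the blown-up point is divided by $(z-c)$ (equivalently, multiplied by the unit $z/(z-c)=1/(1-c\zeta)$ on the other chart), so that the transition function $z^{n}$ becomes $z^{n+1}$ and the divisors over $0$ and $\infty$ are untouched — this is exactly the paper's choice $u_0=y_0/(x_0-p)$, $u_\infty=y_\infty/(1-px_\infty)$. Your version is only slightly more explicit about which blowup chart carries the strict transform and about covering by $C\setminus\{\infty\}$ and $C\setminus\{0,c\}$, and the normal-bundle aside is an optional extra rather than a different argument.
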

\begin{proof}
Let $U_{n,0}$ be the affine formal scheme $C_n\setminus\{\infty\}$ and let $U_{n,\infty}$ be $C_n\setminus\{0\}$.
Assume
\[ U_{n,0}\simeq\Spf\C[x_0]\llb y_0\rrb,\qquad U_{n,\infty}\simeq\Spf\C[x_\infty]\llb y_\infty\rrb.\]
The gluing of $U_{n,0}$ and $U_{n,\infty}$ is given by
\begin{align*}
\C[x_0,x_0\inv]\llb y_0\rrb &\longrightarrow\C[x_\infty,x\inv_\infty]\llb y_\infty\rrb\\
x_0 &\longmapsto x\inv_\infty\\
x\inv_0 &\longmapsto x_\infty\\
y_0 &\longmapsto x^n_\infty\cdot y_\infty.
\end{align*}

Similarly, let $\hU_{n,0}$ be the affine formal scheme $\hC_n\setminus\{\infty\}$ and let $\hU_{n,\infty}$ be $\hC_n\setminus\{0\}$.
We choose coordinates \[u_0=\frac{y_0}{x_0-p},\qquad u_\infty=\frac{y_\infty}{1-p x_\infty}.\]
We obtain
\[ \hU_{n,0}\simeq\Spf\C[x_0]\llb u_0\rrb,\qquad \hU_{n,\infty}\simeq\Spf\C[x_\infty]\llb u_\infty\rrb.\]
The gluing of $\hU_{n,0}$ and $\hU_{n,\infty}$ is given by
\begin{align*}
\C[x_0,x_0\inv]\llb u_0\rrb &\longrightarrow\C[x_\infty,x\inv_\infty]\llb u_\infty\rrb\\
x_0 &\longmapsto x\inv_\infty\\
x\inv_0 &\longmapsto x_\infty\\
u_0 &\longmapsto \frac{y_0}{x_0-p} = \frac{x^n_\infty\cdot y_\infty}{\frac{1}{x_\infty}-p}=\frac{x^{n+1}_\infty\cdot y_\infty}{1- p x_\infty}=x^{n+1}_\infty\cdot u_\infty.
\end{align*}

Therefore, we obtain an isomorphism $\hC_n\simeq C_{n+1}$.
Since both divisors $\hE_n$ and $E_{n+1}$ are defined by the equation $x_0=0$,
and both divisors $\hF_n$ and $F_{n+1}$ are defined by the equation $x_\infty=0$,
the isomorphism $\hC_n\simeq C_{n+1}$ induces an isomorphism $(\hC_n, \hE_n, \hF_n)\simeq(C_{n+1},E_{n+1},F_{n+1})$.
\end{proof}

\begin{rem}\label{rem:toric_description}
We observe that the triple $(C_n,E_n,F_n)$ has a toric description.
Let $u=(1,0), v=(0,1), w=(-1,n)$ be three vectors in $\Z^2$.
Consider the fan in $\R^2$ consisting of a cone generated by the vectors $u,v$ and another cone generated by the vectors $v,w$.
Let $Z$ be the corresponding smooth quasi-projective toric surface.
Let $D_u, D_v, D_w$ be the toric divisors corresponding to the rays in the directions $u,v,w$.
The divisor $D_v$ is isomorphic to a projective line.
Its normal bundle is isomorphic to the line bundle $\cO(-n)$.
Let $(C',E',F')$ denote the formal completion of $(Z,D_u,D_w)$ along $D_v$.
Then the triple $(C',E',F')$ is isomorphic to the triple $(C_n,E_n,F_n)$.
\end{rem}

\begin{prop}\label{prop:extension_of_torus_fibration}
The retraction map $\tau\colon X_k\an\to B$ is an affinoid torus fibration over $B\setminus O$, where $O$ denotes the origin of $B$.
\end{prop}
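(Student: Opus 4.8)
The plan is to reduce the statement to a local computation near the codimension-one strata $\Delta_i^\circ$. We already know from the discussion before the statement (following \cite{Gubler_Skeletons_2014}) that $\tau$ is an affinoid torus fibration over $\bigcup_i \Delta_{i,i+1}^\circ$, i.e.\ over the open cells of maximal dimension. So it remains to produce, for each point $b\in\Delta_i^\circ$ lying on an interior edge $\Delta_i$, an open neighborhood $U\ni b$ in $B\setminus O$ together with an identification of $\tau^{-1}(U)\to U$ with a model map $(\val^n)^{-1}(V)\to V$, $n=2$. The key geometric input is that, étale-locally on $Y_{\kc}$ along the stratum $D_i$ of the special fiber meeting $X_k$, the pair $(Y_{\kc},D_{\kc})$ looks like the formal model $(C_n, E_n, F_n)$ of \cref{rem:toric_description} — with $D_i$ playing the role of the $\cO(-n)$-curve $D_v$, and the two adjacent components $D_{i-1},D_{i+1}$ of $D_{\kc}$ meeting it transversally playing the roles of $E_n$, $F_n$ — where $n=n_i$ is (minus) the self-intersection number $D_i^2$ in $Y$. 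I would first make this étale-local identification precise.

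Next I would analyze the candidate fibration on the toric model directly. Using the toric description of \cref{rem:toric_description}, with cone structure given by $u=(1,0),v=(0,1),w=(-1,n)$, the complement $X' = Z\setminus(D_u\cup D_v\cup D_w)$ is a torus $\Gm^2$, and on its analytification the valuation map $\val^2$ together with the piecewise-linear identification of the relevant part of $B$ with the toric picture exhibits $\tau$ over a neighborhood of the open ray $\R_{>0}\cdot v$ as exactly an affinoid torus fibration: the point is that along the interior ray the local monodromy of the integral affine structure is trivial (both adjacent maximal cones glue via a transformation fixing the ray pointwise), so $\val^2$ provides honest torus-fibration charts there. Concretely, I would write down an affinoid subdomain of $(\Gm^2)^{\mathrm{an}}$ over a small neighborhood $V\subset\R^2$ of a point of the ray, check it is of the form $(\val^2)^{-1}(V)$, and check that $\tau$ restricted to the preimage of the corresponding neighborhood in $B$ agrees with it. \cref{lem:blowup} and the toric picture are precisely what guarantee that passing between different self-intersection numbers $n$ — equivalently, performing the toroidal modifications that relate an arbitrary Looijenga pair to the model — does not disturb this local structure.

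Then I would globalize: cover $B\setminus O$ by the open cells $\Delta_{i,i+1}^\circ$ (where the fibration is already known), together with one neighborhood of each interior edge point (handled by the previous paragraph), and observe that these opens together with $O$ removed cover all of $B\setminus O$, since the only point not in any $\Delta_{i,i+1}^\circ\cup\Delta_i^\circ$ is $O$ itself. Since being an affinoid torus fibration is a local condition on the target, the local charts patch to give the assertion.

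The main obstacle will be the second step: making rigorous that, over a neighborhood of an interior edge, the retraction map genuinely coincides with a $\val^2$-model and not merely with something that looks like one after forgetting the integral affine structure. This requires (a) the precise étale-local comparison of $(Y_{\kc}, D_{\kc})$ with the toric model $(C_n,E_n,F_n)$ along $D_i$, using that $D_i$ is a smooth rational curve with normal bundle $\cO(D_i|_{D_i})=\cO(n_i)$ inside $Y$ and that the two neighbors meet it at $0$ and $\infty$; and (b) checking that Berkovich's retraction is compatible with this étale-local model — i.e.\ that $\tau$ is computed by the functions $\val(u_i(x))$ as recalled after \cref{def:integral_affine_structure}, and that on the toric model these functions are exactly the coordinates pulled back by $\val^2$. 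Once (a) and (b) are in place, the identification with $(\val^2)^{-1}(V)\to V$ is a direct unwinding of the toric coordinates in \cref{rem:toric_description} and the gluing computation in \cref{lem:blowup}.
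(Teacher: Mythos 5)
Your overall strategy coincides with the paper's: reduce to a neighborhood of a point $b\in\Delta_i^\circ$, identify the formal completion of $(Y,D_{i-1},D_{i+1})$ along $D_i$ with a standard triple $(C_n,E_n,F_n)$, and conclude from the toric case of \cref{rem:toric_description} that $\tau$ is an affinoid torus fibration there. The last two steps of your plan (the toric computation and the globalization) are fine and match the paper.

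The genuine gap is in how you propose to establish the identification of the formal neighborhood with the model triple. You want to deduce it from the fact that $D_i$ is a smooth rational curve with normal bundle $\cO(D_i^2)$ whose two neighbors meet it at two distinct points. But the formal neighborhood of a smooth rational curve in a surface is \emph{not} determined by its normal bundle in general: the isomorphism with the completed total space of the normal bundle is obstructed by groups of the form $\rH^1(D_i, T_Y|_{D_i}\otimes (N^\vee)^{\otimes m})$, and these need not vanish when $D_i^2>0$ (which certainly occurs for Looijenga pairs, e.g.\ a triangle of lines in $\bbP^2$). Even in the nonpositive case one would have to run this deformation-theoretic argument, which you have not supplied. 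The paper sidesteps this entirely: it first performs toric blowups (corner blowups, which do not alter $\tau$ near $b$) to reduce, via \cite[Proposition 1.19]{Gross_Mirror_Log_2011}, to the case where $(Y,D)$ is obtained from a toric pair $(\oY,\oD)$ by blowing up finitely many points on the smooth locus of $\oD$; the base case of the identification is then \cref{rem:toric_description}, and \cref{lem:blowup} is applied inductively to show that each such interior blowup takes $(C_n,E_n,F_n)$ to $(C_{n+1},E_{n+1},F_{n+1})$. Note also that you describe the blowups handled by \cref{lem:blowup} as ``toroidal modifications''; they are not --- they are blowups at smooth points of the boundary (the point $p\in C\setminus\{0,\infty\}$), precisely the non-toric ones. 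The missing ingredient in your write-up is the structural reduction to a toric model, without which ``making the étale-local identification precise'' is exactly the hard part of the proposition rather than a routine verification.
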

\begin{proof}
Since the retraction map $\tau\colon X_k\an\to B$ is an affinoid torus fibration over $\bigcup_{i=1}^l\Delta_{i,i+1}^\circ$,
it suffices to show that for any $i\in\{1,\dots,l\}$, any point $b$ in the open stratum $\Delta^\circ_i$, the retraction map $\tau\colon X_k\an\to B$ is an affinoid torus fibration over a neighborhood of $b$.

The toroidal construction of \cite{Kempf_Toroidal_1973} gives a one-to-one correspondence between finite rational conical subdivisions of the Clemens polytope $B$ and toric blowups of the Looijenga pair $(Y,D)$ in the sense of \cite[\S 1.3]{Gross_Mirror_Log_2011}.
In particular, toric blowups of the pair $(Y,D)$ do not alter the retraction map $\tau\colon X_k\an\to B$ near the point $b\in B$.
So we can assume that $(Y,D)$ admits a toric model $(\oY,\oD)$ by \cite[Proposition 1.19]{Gross_Mirror_Log_2011}, i.e.\ the pair $(Y,D)$ is obtained from a smooth projective toric surface $(\oY,\oD)$ by blowing up finitely many points on the smooth locus of $\oD$.

Let $(C',E',F')$ denote the formal completion of the triple $(Y,D_{i-1},D_{i+1})$ along the divisor $D_i$.
By \cref{lem:blowup} and \cref{rem:toric_description}, it is thus isomorphic to the triple $(C_n,E_n,F_n)$ for some $n$.
By construction, the retraction map $\tau\colon X_k\an\to B$ over the open subset 
\begin{equation}\label{eq:cone}
\Delta_{i-1,i}^\circ\cup\Delta_i^\circ\cup\Delta_{i,i+1}^\circ\subset B
\end{equation}
is completely determined by the triple $(C',E',F')$.
Moreover, since we have affinoid torus fibration everywhere on the base in the toric case, by \cref{rem:toric_description} again, the retraction map $\tau\colon X_k\an\to B$ is an affinoid torus fibration over the open subset in \cref{eq:cone}.
\end{proof}

Let $\psi_i\colon\Delta_{i-1,i}^\circ\cup\Delta_i^\circ\cup\Delta_{i,i+1}^\circ\to\R^2$ be the continuous map which is linear on $\Delta_{i-1,i}^\circ$, $\Delta_i^\circ$ and $\Delta_{i,i+1}^\circ$, and which sends the vectors $\be_{i-1}, \be_i, \be_{i+1}$ respectively to the vectors $(1,0), (0,1), (-1,-D^2_i)\in\R^2$, where $D_i^2$ denote the self-intersection number of the curve $D_i$ inside the surface $Y$.

\begin{rem}\label{rem:Zaffine_structure_on_B}
By \cref{prop:extension_of_torus_fibration} and \cite[Theorem 1]{Kontsevich_Affine_2006}, we obtain a \Zaffine structure on $B\setminus O$ so that the tropical base $B$ is a polyhedral \Zaffine manifold with a singularity at $O$ in the sense of \cref{def:polyhedral_Zaffine_manifold_with_sigularities}.
By the proof of \cref{prop:extension_of_torus_fibration}, the \Zaffine structure restricted to $\Delta_{i-1,i}^\circ\cup\Delta_i^\circ\cup\Delta_{i,i+1}^\circ$ is isomorphic to the pullback of the standard \Zaffine structure on $\R^2$ by the map $\psi_i$ defined above.
So we obtain the same \Zaffine structure as in \cite[\S 1.1]{Gross_Mirror_Log_2011}.
By \cite[Lemma 1.3]{Gross_Mirror_Log_2011}), the \Zaffine structure on $B\setminus O$ extends across the origin if and only if $Y$ is toric and $D$ is the toric boundary.
\end{rem}

\section{Tropical cylinders} \label{sec:tropical_cylinders}

Tropical geometry is a fundamental tool in our enumeration of holomorphic cylinders.
In this section, we introduce the notion of spines, tropical cylinders, and extensions of them.
We prove a rigidity property of tropical cylinders, which will be an important ingredient in the proof of \cref{thm:properness_of_moduli}.

\begin{defin}
An \emph{unbounded \Zaffine graph} $(\Gamma, V_\infty(\Gamma))$ consists of the following data:
\begin{enumerate}[(i)]
\item A finite graph $\Gamma$ without loops.
We denote by $V(\Gamma)$ the set of vertices of $\Gamma$ and by $E(\Gamma)$ the set of edges of $\Gamma$.
\item A subset of 1-valent vertices $V_\infty(\Gamma)\subset V(\Gamma)$ called \emph{unbounded vertices}.
We call the rest of the vertices \emph{bounded vertices}.
\item For every edge $e$ with two bounded endpoints, a \Zaffine structure on $e$ which is isomorphic to the closed interval $[0,\alpha]\subset\R$ for a positive real number $\alpha$.
\item For every edge $e$ with one unbounded vertex $v_\infty$, a \Zaffine structure on $e\setminus\{v_\infty\}$ which is isomorphic to the interval $[0,+\infty)\subset\R$.
\item For every edge $e$ with two unbounded vertices $v_\infty$ and  $v'_\infty$, a \Zaffine structure on $e\setminus\{v_\infty, v'_\infty\}$ which is isomorphic to the standard \Zaffine structure on $\R$.
\end{enumerate}
We will simply say a \emph{\Zaffine graph} where unbounded vertices are not present.
\end{defin}

\begin{defin}
An (unbounded) \Zaffine tree is an (unbounded) \Zaffine graph whose underlying graph is a tree.
\end{defin}

\begin{defin} \label{def:Zaffine_map}
	Let $(\Gamma,V_\infty(\Gamma))$ be an unbounded \Zaffine graph and let $\Sigma$ be a polyhedral complex with a piecewise \Zaffine structure.
	A \emph{\Zaffine map} $h\colon\Gamma\setminus V_\infty(\Gamma) \to \Sigma$ is a continuous proper map such that every edge maps to a polyhedron in $\Sigma$ and the maps are compatible with the \Zaffine structures.
A \emph{\Zaffine immersion} is a \Zaffine map that does not contract any edge to a point.
\end{defin}

Let $\Sigma$ be a polyhedral \Zaffine manifold with singularities and let $\Sigma_0\subset\Sigma$ be the set of smooth points in the sense of \cref{def:polyhedral_Zaffine_manifold_with_sigularities}.
Let $h$ be a \Zaffine map from an unbounded \Zaffine graph $(\Gamma, V_\infty(\Gamma))$ to $\Sigma$.
For a bounded vertex $v\in V(\Gamma)$ which maps to $\Sigma_0$ and an edge $e$ connected to $v$,
we denote by $w^0_v(e)$ the unitary integral tangent vector at $v$ pointing to the direction of $e$, and by $w_v(e)$ the image of $w^0_v(e)$ in $T_{h(v)}\Sigma(\Z)$, where $T_{h(v)}\Sigma(\Z)$ denotes the integral lattice in the tangent space of the \Zaffine manifold $\Sigma_0$ at the point $h(v)$.

\begin{defin}
The \Zaffine map $h$ above is said to be \emph{balanced} at a vertex $v$ which maps to $\Sigma_0$ if we have $\sum_{e\ni v} w_v(e)=0\in T_{h(v)}\Sigma(\Z)$.
\end{defin}

From now on we restrict to the particular polyhedral \Zaffine manifold $B$ with a singularity at $O$ in \cref{rem:Zaffine_structure_on_B}.

\begin{defin} \label{def:spine}
A \emph{spine} in the tropical base $B$ consists of a connected \Zaffine tree $\Gamma$, two 1-valent vertices $(v_1,v_2)$ of $\Gamma$, and a \Zaffine immersion $h\colon\Gamma\to B$ satisfying the following conditions:
\begin{enumerate}[(i)]
\item The image of $h$ does not contain the origin $O\in B$.
\item The vertices $v_1$ and $v_2$ are the only 1-valent vertices of $\Gamma$.
\item For every vertex $v$, every edge $e$ connected to $v$, neither of the vectors $\pm w_v(e)$ points towards the origin $O\in B$. 
\item \label{item:spine:balancing} For every 2-valent vertex $v$, the vector $-\sum_{e\ni v} w_v(e)$ is either zero or points towards to origin $O\in B$.
\end{enumerate}
\end{defin}

\begin{defin} \label{def:tropical_cylinder}
A \emph{tropical cylinder} in the tropical base $B$ consists of a connected \Zaffine tree $\Gamma$, a pair of two 1-valent vertices $(v_1, v_2)$ of $\Gamma$, and a \Zaffine immersion $h\colon\Gamma\to B$ satisfying the following conditions:
\begin{enumerate}[(i)]
\item The inverse image of the origin $O\in B$ consists of all the 1-valent vertices of $\Gamma$ except $v_1$ and $v_2$.
\item For every vertex $v$ whose valency is greater than 1, the \Zaffine map $h$ is balanced at $v$.
\end{enumerate}
\end{defin}

\begin{defin}
An \emph{extended spine} in the tropical base $B$ consists of a connected unbounded \Zaffine tree $\Gamma$ with a pair of unbouded vertices $(v_1,v_2)$  and a \Zaffine immersion $h\colon\Gamma\setminus\{v_1,v_2\}\to B$ such that Conditions (i)-(\ref{item:spine:balancing}) of \cref{def:spine} hold.
\end{defin}

\begin{defin} \label{def:extended_tropical_cylinder_in_B}
An \emph{extended tropical cylinder} in the tropical base $B$ consists of a connected unbounded \Zaffine tree $\Gamma$ with a pair of unbounded vertices $(v_1,v_2)$ and a \Zaffine immersion $h\colon\Gamma\setminus\{v_1,v_2\}\to B$ such that Conditions (i) and (ii) of \cref{def:tropical_cylinder} hold.
\end{defin}

We endow $\R$ with the standard \Zaffine structure $\mathrm{Aff}_{\Z,\R}$.
We obtain a \Zaffine structure on the product $\R\times(B\setminus O)$, so $\R\times B$ is a polyhedral \Zaffine manifold with singularities.
Set $\tB\coloneqq\R\times B$.
We denote by $\pi_1\colon\tB\to \R$ and $\pi_2\colon\tB\to B$ the two projections.

\begin{rem}\label{rem:advantage_of_tilde}
The factor $\R$ in $\tB$ will have two advantages in our approach:
\begin{enumerate}[(i)]
\item It will allow us to treat non-injective spines in a simple way by considering their graphs.
\item It will allow us to eliminate certain multiplicities. Consequently, we can circumvent the sophisticated machinery of relative Gromov-Witten invariants (cf.\ \cite{Li_Symplectic_surgery_2001,Ionel_Relative_Gromov-Witten_2003,Gathmann_Absolute_2002,Li_Stable_morphisms_2001,Li_A_degeneration_formula_2002,Gross_Logarithmic_2013}).
\end{enumerate}
\end{rem}

\begin{defin} \label{def:extended_tropical_cylinder_in_tB}
An \emph{extended tropical cylinder} in $\tB$ consists of a connected unbounded \Zaffine tree $\Gamma$ with a pair of unbounded vertices $(v_1,v_2)$ and a \Zaffine immersion $h\colon\Gamma\setminus\{v_1,v_2\}\to \tB$ such that
\begin{enumerate}[(i)]
\item the composition $\pi_1\circ h$ is a \Zaffine map balanced at every vertex of $\Gamma$, and
\item the composition $\pi_2\circ h$ is an extended tropical cylinder in $B$ as in \cref{def:extended_tropical_cylinder_in_B}.
\end{enumerate}
\end{defin}

Let $L=(\Gamma,(v_1,v_2),h\colon \Gamma\to B)$ be a spine in $B$.
We define in the following an extension $L'=(\Gamma',\{v'_1,v_2\},h')$ of the spine $L$ and a curve class $\beta'\in\NE(Y)$ associated to the extension.
We set initially $\Gamma'=\Gamma$ and $h'=h$.
Let $e_1$ denote the edge connected to the vertex $v_1$.
Let $r_1$ denote the ray starting at $h(v_1)$ with direction opposite to $h(e_1)$.
We distinguish two cases according to whether the ray $r_1$ hits, or not, a codimension one face $\Delta_i$ of $B$.
\begin{enumerate}[(i)]
\item Assume that the ray $r_1$ does not hit any codimension one face of $B$ except possibly the point $h'(v_1)$.
Then we add an unbounded vertex $v'_1$ to $\Gamma'$ and an edge $e'_1$ connecting $v'_1$ with $v_1$.
We endow $e'_1\setminus\{v'_1\}$ with the \Zaffine structure isomorphic to the interval $[0,+\infty)\subset\R$.
Let $h'|_{e'_1}$ be the \Zaffine map sending $e'_1\setminus\{v'_1\}$ to the ray $r_1$ so that $h'$ becomes balanced at the vertex $v_1$.
In this case, we set $\beta'=0$.
\item Otherwise, we add a bounded vertex $v'_1$ to $\Gamma'$ and an edge $e'_1$ connecting $v'_1$ with $v_1$.
Let $h'(v'_1)$ be the first point other than $h'(v_1)$ where the ray $r_1$ hits some codimension one face $\Delta_i\subset B$.
Let $\widebar e'_1$ denote the segment connecting $h'(v_1)$ and $h'(v'_1)$.
Assume that the pullback of the piecewise \Zaffine structure on $B$ to $\widebar e'_1$ is isomorphic to the standard \Zaffine structure on a closed interval $[0,\alpha]$ for a positive real number $\alpha$.
Let $m$ be the ratio $w_{v_1}(e_1)/w^0_{v_1}(e_1)$.
We endow the edge $e'_1$ with the \Zaffine structure isomorphic to the interval $[0,\alpha/m]\subset\R$.
Let $h'|_{e'_1}$ be the \Zaffine map sending the edge $e'_1$ to the segment $\widebar e'_1$ so that $h'$ becomes balanced at the vertex $v_1$.
Let $\be_i$ be the primitive integral vector in the direction of $\Delta_i$.
Let $\mu$ be the lattice length of the wedge product $w_{v'_1}(e'_1)\wedge\be_i$.
We set $\beta'=\mu[D_i]\in\NE(Y)$.

\end{enumerate}
We call the triple $(\Gamma',\{v'_1,v_2\},h')$ the \emph{extension} of the spine $L$ at the vertex $v_1$,
and we call $\beta'$ the curve class associated to this extension.
Similarly, we can extend the spine $L$ at the vertex $v_2$.
We do iterated extensions at both sides until both sides end with unbounded vertices.
If this happens after finitely many steps, we call the spine $L$ \emph{extendable}, we call the final product the \emph{extended spine associated to the spine} $L$, and we define the curve class associated to this extension to be the sum of all the curve classes associated to the intermediate extensions.

\begin{rem}
The Looijenga pair is called \emph{positive} if the intersection matrix $(D_i\cdot D_j)$ is not negative semi-definite.
This holds if and only if $Y\setminus D$ is the minimal resolution of an affine surface (cf.\ \cite[Lemma 5.9]{Gross_Mirror_Log_2011}).
In this case, every spine in the tropical base $B$ is extendable (cf.\ \cite[Corollary 1.6]{Gross_Mirror_Log_2011}).
\end{rem}

Let $\hL=(\Gamma,(v_1,v_2),h)$ be an extended spine in $B$.
Set $\Gamma'=\Gamma$ and $h'=h$.
For every bounded vertex $v$ of $\Gamma$ such that $\sigma_v\coloneqq\sum_{e\ni v} w_v(e)$ is non-zero, we add a vertex $v'$ to $\Gamma'$ and an edge $e_v$ connecting $v$ and $v'$.
Set $h'(v')=O$.
Let $\sigma_v^0$ denote the primitive vector in $T_{h'(v)} B(\Z)$ in the direction of $\sigma_v$.
Let $m_v$ be the ratio $\sigma_v/\sigma_v^0$.
Let $\widebar e_v$ denote the segment connecting $h'(v)$ and $h'(v')$.
Assume that the pullback of the piecewise \Zaffine structure on $B$ to $\widebar e_v$ is isomorphic to the standard \Zaffine structure on a closed interval $[0,\alpha_v]$ for a positive real number $\alpha_v$.
We endow the edge $e_v$ with the \Zaffine structure isomorphic to the interval $[0,\alpha_v/m_v]\subset\R$.
Let $h'|_{e_v}$ be the \Zaffine map sending the edge $e_v$ to the segment $\widebar e_v$ so that $h'$ becomes balanced at the vertex $v$.
The resulting triple $(\Gamma', (v_1,v_2), h')$ is an extended tropical cylinder in $B$, which we call the \emph{extended tropical cylinder in $B$ associated to the extended spine $\hL$}.

Since $\hL=(\Gamma,(v_1,v_2),h)$ is an extended spine in $B$, we see that $\Gamma\setminus\{v_1,v_2\}$ is homeomorphic to the real line $\R$.
Let $\mathrm{Aff}_{\Z,\Gamma}$ be the \Zaffine structure on $\Gamma\setminus\{v_1,v_2\}$ whose restriction to every edge of $\Gamma$ coincides with the given \Zaffine structure on the edge.
We obtain an isomorphism of \Zaffine manifolds $h_0\colon(\Gamma\setminus\{v_1,v_2\},\mathrm{Aff}_{\Z,\Gamma})\to(\R,\mathrm{Aff}_{\Z,\R})$.
Let $\pi_\Gamma\colon\Gamma'\to\Gamma$ be the map contracting all newly added edges.
Set
\[h''=(h_0\circ\pi_\Gamma\colon\Gamma'\setminus\{v_1,v_2\}\to\R,\  h'\colon\Gamma'\setminus\{v_1,v_2\}\to B).\]
Then the triple $(\Gamma',(v_1,v_2),h'')$ is an extended tropical cylinder in $\tB$, which we call the \emph{extended tropical cylinder in $\tB$ associated to the extended spine $\hL$}.

\subsection*{Rigidity of the tropical cylinder}

Using the embedding $B\subset\R^l$, for any point $p\in B\setminus O$, any vector $w\in T_p B(\Z)$, we denote by $(w^1,\dots,w^l)$ the coordinates of $w$ with respect to the basis $(\be_1,\dots,\be_l)$ of $\Z^l\subset\R^l$.
We define $|w|=\sqrt{(w^1)^2+\dots+(w^l)^2}$, called the \emph{norm} of the vector $w$.

Let $Z=(\Gamma,(v_1,v_2),h)$ be an extended tropical cylinder in $\tB$.
Let $e_1$ and $e_2$ denote the edges connected to the vertices $v_1$ and $v_2$ respectively.
Let $\widebar v_1$ be the bounded endpoint of $e_1$ and let $\widebar v_2$ be the bounded endpoint of $e_2$.
We denote $A(Z)\coloneqq\max\{|w_{\widebar v_1}(e_1)|,|w_{\widebar v_2}(e_2)|\}$.

\begin{prop}\label{prop:rigidity_of_tropical_cylinder}
Let $Z=(\Gamma,(v_1,v_2),h)$ be an extended tropical cylinder in $\tB$.
Let $e_1$ and $e_2$ denote the edges connected to the vertices $v_1$ and $v_2$ respectively.
Let $p$ be a point on the ray $h(e_1\setminus\{v_1\})$.
Let $A_0$ be a real number such that $A_0>A(Z)$.
Then there exists an open neighborhood $U$ of the image $h(\Gamma)$ in $\tB$ satisfying the following conditions:
\begin{enumerate}[(i)]
\item Let $Z'=(\Gamma',\{v_1',v_2'\},h')$ be another extended tropical cylinder in $\tB$ such that $A(Z')<A_0$ and that its image $h'(\Gamma')$ lies in $U$ and contains $p$, then the image $h'(\Gamma')$ coincides necessarily with the image $h(\Gamma)$.
\item \label{item:rigidity:polyhedral} For $i=1,2$, there exists a point $p_i$ on the ray $h(e_i\setminus\{v_i\})$ and a neighborhood $U_i$ of $p_i$ in $\tB$, such that any translation of $U_i$ along the ray $h(e_i\setminus\{v_i\})$ is contained in $U$.
\end{enumerate}
\end{prop}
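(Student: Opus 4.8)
The plan is to exploit the piecewise-linear, balanced nature of extended tropical cylinders in $\tB$ together with the rigidity coming from the singularity at $O\in B$. First I would fix notation: an extended tropical cylinder $Z$ in $\tB$ projects under $\pi_2$ to an extended tropical cylinder $\pi_2\circ h$ in $B$, whose underlying graph $\Gamma\setminus\{v_1,v_2\}$ is homeomorphic to $\R$ once all the edges to $O$ are contracted; the bounded vertices mapping to $O$ carry the ``corrections''. The key observation is that the two unbounded edges $e_1,e_2$ are genuine rays in $\tB$ emanating from $h(\widebar v_1)$ and $h(\widebar v_2)$ with primitive directions of bounded norm (controlled by $A(Z)<A_0$), and that between these two rays the image $h(\Gamma)$ is a \emph{finite} balanced graph whose only ``sink'' is the origin $O$, since every vertex of valency $>1$ is balanced and the $1$-valent bounded vertices all map to $O$. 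I would first prove a finiteness statement: given the bound $A_0$ on $A(Z')$ and the requirement that $p\in h'(\Gamma')$ for the \emph{fixed} point $p$ on the ray $h(e_1\setminus\{v_1\})$, the edge of $Z'$ passing through $p$ must be a ray with primitive direction of norm $<A_0$ and, by Condition~(iii) of \cref{def:spine}, not pointing towards $O$; since $p$ is fixed, this pins down the ray containing $p$ to a finite list of candidates, and (shrinking $U$ if necessary) forces it to coincide with $h(e_1\setminus\{v_1\})$ itself.

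The heart of the argument is an inductive propagation along $\Gamma'$ starting from the ray through $p$. Having identified the unbounded edge $e_1'$ of $Z'$ with the ray $h(e_1\setminus\{v_1\})$, one walks to the first bounded vertex of $Z'$ on this ray; this vertex must lie in $U$, and if $U$ is a sufficiently small neighborhood of the polyhedral set $h(\Gamma)$, the vertex must lie at one of the finitely many vertices of $h(\Gamma)$. At such a vertex the balancing condition (Condition~(ii) of \cref{def:tropical_cylinder}, resp.\ the balancing of $\pi_1\circ h$ in \cref{def:extended_tropical_cylinder_in_tB}) together with Conditions~(iii)--(iv) of \cref{def:spine} (no edge direction points away from or toward $O$ except the correction edges, which go exactly to $O$) leaves only finitely many possibilities for the outgoing edges; shrinking $U$ around $h(\Gamma)$ kills all but the ones already present in $h(\Gamma)$. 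Iterating this — the graph is finite, so the process terminates — one concludes $h'(\Gamma')\subseteq h(\Gamma)$; the reverse inclusion follows because $h'(\Gamma')$ is itself a complete balanced graph joining two rays and cannot be a proper balanced subgraph of $h(\Gamma)$ once it already contains the unbounded edge through $p$ (a proper balanced subgraph would violate balancing at the vertex where it stops). This gives part~(i).

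For part~(ii), I would simply note that each ray $h(e_i\setminus\{v_i\})$ is a genuine half-line in $\tB$; choosing $p_i$ far enough out along the ray and $U_i$ a small tubular neighborhood of $p_i$, the union $\bigcup_{t\ge 0}(U_i + t\cdot(\text{direction of }e_i))$ is a half-infinite ``tube'' around the tail of the ray, which is contained in $h(\Gamma)$'s $\epsilon$-neighborhood for the tail; so taking $U$ to be a neighborhood of $h(\Gamma)$ that along each ray-tail is at least this tube (which is possible since outside a compact set $h(\Gamma)$ is exactly the union of the two rays) gives the claim. Concretely, I would construct $U$ in two pieces: a bounded piece that is a small neighborhood of the compact ``core'' of $h(\Gamma)$ chosen fine enough for the propagation argument above, glued to two straight tubes around the ray-tails.

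\medskip
\noindent\textbf{Main obstacle.} The delicate point is the propagation step: one must argue that shrinking $U$ around $h(\Gamma)$ really does eliminate \emph{all} alternative balanced continuations at each vertex, uniformly over all competitors $Z'$ with $A(Z')<A_0$. The norm bound $A_0$ controls the unbounded edges but not a priori the internal edges of $Z'$; the resolution is that internal edges of a tropical cylinder either terminate at $O$ (a fixed point, far from the core if the core avoids $O$) or connect two vertices forced to lie among the finitely many vertices of $h(\Gamma)$ once $U$ is small — so the combinatorial type of $Z'$ is constrained to a finite set, and one then takes $U$ small enough to separate $h(\Gamma)$ from every other member of that finite set. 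Making the dependence of ``small enough'' on $A_0$ (and not on the individual $Z'$) explicit is the part that needs care, but it is a finiteness argument rather than a hard estimate.
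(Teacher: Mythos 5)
Your overall strategy is the same as the paper's: identify the unbounded edge through $p$, then propagate inductively along the chain of vertices of the cylinder, shrinking $U$ at each step and using the balancing condition to constrain the next edge. However, the resolution you offer for what you yourself flag as the main obstacle does not work, and it is exactly there that the hypothesis $A(Z')<A_0$ must be used. You claim the internal edges of a competitor $Z'$ are controlled because their endpoints are ``forced to lie among the finitely many vertices of $h(\Gamma)$,'' so that only finitely many combinatorial types survive. This conflates the image of an edge with its weighted direction vector $w_{v'}(e')$: even if both endpoints of an internal edge of $Z'$ are confined to small neighborhoods of vertices of $h(\Gamma)$, the integral vector $w_{v'}(e')$ may have arbitrarily large norm, and an integral vector of unbounded norm whose direction is only approximately prescribed ranges over infinitely many distinct directions. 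Hence there is no finite list of candidates and no uniform threshold for ``$U$ small enough'' independent of $Z'$. (Relatedly, you invoke Conditions (iii)--(iv) of \cref{def:spine} to constrain $Z'$, but a competitor is only assumed to be an extended tropical cylinder, not one arising from a spine, so those conditions are not available.)

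The paper closes this gap by propagating the norm bound inward through balancing. At the first bounded vertex $v'_{11}$, the incoming vector $w_{v'_{11}}(e'_1)$ has norm at most $A_0$ by hypothesis; after $U$ has been shrunk so that $v'_{11}$ lies on the ray $O\pi_2(h(v_{11}))$ and the outgoing segment is parallel to $\pi_2(h(e_{11}))$, the remaining edges at $v'_{11}$ all point along the fixed ray toward $O$, and the balancing condition expresses the outgoing weight vector as a combination of the incoming one and that fixed direction. This bounds $|w_{v'_{11}}(e'_{11})|$ by a constant depending only on $A_0$ and $Z$; integrality together with this norm bound then yields the finiteness needed to upgrade ``approximately parallel'' to ``parallel'' by a further shrinking whose size is uniform over all competitors, and the estimate iterates along the chain. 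Your argument becomes correct once your finiteness claim is replaced by this inductive norm estimate; the remaining ingredients (using $p$ to fix positions rather than just directions, ruling out a proper balanced subgraph, and the core-plus-tubes construction of $U$ for part (ii)) are sound and consistent with the paper.
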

\begin{proof}
Let $v_1,v_{11},v_{12},\dots,v_{1m},v_2$ be the chain of vertices on the path connecting the vertices $v_1$ and $v_2$ in $\Gamma$.
Let $e_{1j}$ denote the edge connecting the vertices $v_{1j}$ and $v_{1,(j+1)}$.
For any extended tropical cylinder $Z'=(\Gamma',\{v_1',v_2'\},h')$ in $\tB$, we denote by $e'_1$ and $e'_2$ the edges connected to the vertices $v'_1$ and $v'_2$ respectively.
We introduce the similar notations $v'_1,v'_{11},v'_{12},\dots,v'_{1m'},v'_2$ and $e'_{11}, e'_{12}, \dots$ for $Z'$.

We start with any neighborhood $U$ of the image $h(\Gamma)$ in $\tB$ satisfying Condition (ii).
By shrinking $U$ near the rays $h(e_1\setminus\{v_1\})$ and $h(e_2\setminus\{v_2\})$, we can assume that for any extended tropical cylinder $Z'=(\Gamma',\{v_1',v_2'\},h')$ in $\tB$ such that $h'(\Gamma')\subset U$,
the rays $h(e'_1\setminus\{v'_1\})$ and $h(e'_2\setminus\{v'_2\})$ are necessarily parallel to the rays $h(e_1\setminus\{v_1\})$ and $h(e_2\setminus\{v_2\})$, up to a switch of the labeling of $v'_1$ and $v'_2$.

Assume moreover that $A(Z')<A_0$.
By further shrinking $U$, we can ensure that the ray $O\pi_2(h'(v'_{11}))$ coincides with the ray $O\pi_2(h(v_{11}))$.
By further shrinking $U$, we can ensure that the segment $\pi_2(h'(e'_{11}))$ is parallel to the segment $\pi_2(h(e_{11}))$.
Since the norm $| w_{v'_{11}}(e'_{1}) |$ is bounded by $A_0$, the balancing condition at the vertex $v'_{11}$ implies that the norm $| w_{v'_{11}}(e'_{11}) |$ is bounded by a number depending only on $A_0$ and $Z$.
Therefore, by further shrinking $U$, we can ensure that the ray $O\pi_2(h'(v'_{12}))$ coincides with the ray $O\pi_2(h(v_{12}))$ and that the segment $\pi_2(h'(e'_{12}))$ is parallel to the segment $\pi_2(h(e_{12}))$.
Iterating the process, we can ensure that $m=m'$, that the ray $O\pi_2(h'(v'_{1j}))$ coincides with the ray $O\pi_2(h(v_{1j}))$ for $j=1,\dots,m$, and that the segment $\pi_2(h'(e'_{1j}))$ is parallel to the segment $\pi_2(h(e_{1j}))$ for $j=1,\dots,m-1$.

Assume moreover that the image $h'(\Gamma')$ contains the point $p$.
Then the image $\pi_2(h'(\Gamma'))$ contains the point $\pi_2(p)$.
By further shrinking $U$ if necessary, we can ensure that the point $\pi_2(h'(v'_{1j}))$ coincides with the point $\pi_2(h(v_{1j}))$ for $j=1,\dots,m$, and that the segment $\pi_2(h'(e'_{1j}))$ coincides with the segment $\pi_2(h(e_{1j}))$ for $j=1,\dots,m-1$.
By further shrinking $U$ if necessary, the balancing condition at the vertices $v'_{1j}$ ensures that the point $h'(v'_{1j})$ coincides with the point $h(v_{1j})$ for $j=1,\dots,m$, and that the segment $h'(e'_{1j})$ coincides with the segment $h(e_{1j})$ for $j=1,\dots,m-1$.
Finally, the conditions in \cref{def:extended_tropical_cylinder_in_tB} ensures that the image $h'(\Gamma')$ coincides with the image $h(\Gamma)$.
We remark that at every stage of shrinking, we can always ensure that Condition (\ref{item:rigidity:polyhedral}) in the proposition holds.
\end{proof}

Let $C_{\tB}$ denote $\R_{\ge 0}\times \tB$.
We embed $\tB$ into $C_{\tB}$ by the map $x\mapsto (1,x)$.
Let $C'_{\tB}$ be a finite subdivision of $C_{\tB}$ into rational polyhedral cones, so that it induces a polyhedral subdivision $\tB'$ of $\tB$.
For any one-dimensional cone $r$ in $C'_{\tB}$, we denote by $\Star(r)$ the union of the relative interior of the polyhedral cones in $C'_{\tB}$ containing $r$, and by $\oStar(r)$ the union of the polyhedral cones in $C'_{\tB}$ containing $r$.

For a set $R$ of one-dimensional cones in $C'_{\tB}$, we denote
\[U(R)\coloneqq\bigg(\bigcup_{r\in R} \Star(r)\bigg)\cap \tB',\qquad \oU(R)\coloneqq\bigg(\bigcup_{r\in R} \oStar(r)\bigg)\cap \tB'.\]

\begin{rem}\label{rem:rigidity_polyhedral}
Condition (\ref{item:rigidity:polyhedral}) of \cref{prop:rigidity_of_tropical_cylinder} implies that there exists a regular simplicial subdivision $C'_{\tB}$ of $C_{\tB}$ and a set $R$ of one-dimensional cones in $C'_{\tB}$ intersecting $\tB$ such that $h(\Gamma)\subset U(R)\subset \oU(R)\subset U$.
\end{rem}

\section{Holomorphic cylinders} \label{sec:hol_cylinders}

Given an extendable spine $L=(\Gamma_0,(v_{10},v_{20}),h_0)$ in the tropical base $B$ and an element $\beta$ in the cone of curves $\NE(Y)$,
the goal of this section is to define a rational number $N(L,\beta)$, which we call \emph{the number of holomorphic cylinders associated to $L$ with class $\beta$}.

Let $L=(\Gamma_0,(v_{10},v_{20}),h_0)$ be an extendable spine in the tropical base $B$.
Let $\hL=(\hGamma_0,(v_1,v_2),\hh_0)$ be the extended spine in $B$ associated to $L$.
Let $\beta'$ be the curve class associated to the extension and let $\hbeta\coloneqq\beta+\beta'$.
Let $Z=(\Gamma,(v_1,v_2),h\colon\Gamma\setminus\{v_1,v_2\}\to\tB)$ be the extended tropical cylinder in $\tB$ associated to the extended spine $\hL$.
Let $e_1,e_2$ denote the edges connected to the vertices $v_1,v_2$ respectively.

Choose any real number $A_0>A(Z)$.
Let $U$ be the open neighborhood constructed in \cref{prop:rigidity_of_tropical_cylinder}.
Let $C'_{\tB}$ and $R$ be as in \cref{rem:rigidity_polyhedral}.
We can assume that there are two rays $\{r_1,r_2\}\subset R$ pointing in the directions of $h(e_1\setminus\{v_1\})$ and $h(e_2\setminus\{v_2\})$ respectively.

Let $\tX=\bbG_{\mathrm m/\C}\times(Y\setminus D)$ and $\tX_k=\tX\times_{\Spec\C}\Spec k$.
Then
\[\Big((\bbP_\C^1\times Y)\times_{\Spec\C}\Spec\kc,\ \big((\{0,\infty\}\times Y) \cup (\bbP_\C^1\times D)\big)\times_{\Spec\C}\Spec\kc\Big)\]
is an snc log-model of $\tX_k$.
The factor $\bbG_{\mathrm m/\C}$ in $\tX$ corresponds to the factor $\R$ in $\tB$ (see \cref{rem:advantage_of_tilde}).
By toroidal modification, the regular simplicial subdivision $C'_{\tB}$ induces an snc log-model $(\tsX,\tH)$ of $\tX_k$.
Let $\tY$ denote the $k$-variety $\big(\tsX\big)_\eta$.
Let $\tD_1$, $\tD_2$ denote the divisors in $\tY$ corresponding to the rays $r_1, r_2$ respectively.

Let $\tB'_f$ denote the union of bounded polyhedral faces of $\tB'$.
It is by construction the Clemens polytope associated to the snc model $\tsX$ of $\tY$.
We have a retraction map $\ttau\colon\tY\an\to\tB'_f$.
Let $U_f=U\cap\tB'_f$, $U(R)_f=U(R)\cap\tB'_f$, $\oU(R)_f=\oU(R)\cap\tB'_f$.

\begin{defin}\label{def:class_beta}
An element $\gamma\in\NE(\tY)$ is said to \emph{represent} the class $\hbeta\in\NE(Y)$ if the following conditions are satisfied:
\begin{enumerate}[(i)]
\item The element $\gamma$ admits a representative which is a closed curve $C$ contained in $\tX_k\cup\tD_1\cup\tD_2$ intersecting $\tD_1\cup\tD_2$ transversely.
\item \label{item:class_beta:intersection} We require that the intersection numbers $\gamma\cdot \tD_1=1$, $\gamma\cdot \tD_2=1$.
\item \label{item:class_beta:degree_one} Under the projection $\NE(\tY)\to\NE(\bbP^1_k)$,
the image of $\gamma$ is of degree 1.
\item Under the composite morphism $\NE(\tY)\to\NE(Y_k)\xrightarrow{\sim}\NE(Y)$, the image of $\gamma$ equals $\hbeta\in\NE(Y)$.
\end{enumerate}
\end{defin}

We denote by $\tbeta\subset\NE(\tY)$ the finite set of elements in $\NE(\tY)$ representing $\hbeta$.

Let $\bcMon(\tY,\tbeta)$ denote the moduli stack of $n$-pointed rational stable maps into $\tY$ with class in $\tbeta$.
We will assume $n\ge 3$ in the following.
For $1\le i\le n$, we will denote by $\ev_i$ the evaluation morphism of the $i\th$ marked point.
We refer to \cite{Kontsevich_Enumeration_1995,Fulton_Stable_1997} for the classical theory of stable maps, to \cite{Yu_Gromov_2014} for the theory of stable maps in non-archimedean analytic geometry, and to \cite{Kontsevich_Gromov-Witten_1994,Behrend_Stacks_GW_1996,Li_Virtual_GW_algebraic_1998,Behrend_Intrinsic_normal_cone_1997,Behrend_Gromov-Witten_1997} for the theory of Gromov-Witten invariants in algebraic geometry.

\begin{lem}\label{lem:virtual_dim}
The virtual dimension of the moduli stack $\bcMon(\tY,\tbeta)$ equals $n+2$.
\end{lem}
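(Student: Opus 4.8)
The plan is to apply the standard formula for the virtual dimension of a moduli stack of genus-zero stable maps and then track how the target geometry $\tY$ and the constraint class $\tbeta$ feed into it. Recall that for a smooth projective variety $W$ of dimension $\dim W$ over a field, a curve class $\gamma \in \NE(W)$, and $n$ marked points, the moduli stack $\bcMon(W,\gamma)$ has virtual dimension
\[
\dim W - 3 + n + \int_\gamma c_1(T_W).
\]
In our situation $\tY = \big(\tsX\big)_\eta$ is a smooth projective $k$-variety obtained from $\bbP^1_k \times Y_k$ by a sequence of toric (hence smooth) blowups along the toroidal log-model construction, so $\dim \tY = 3$ and the first two terms combine to give $n$. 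It therefore suffices to show that $\int_\gamma c_1(T_{\tY}) = 2$ for every $\gamma \in \tbeta$.

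The heart of the computation is thus to evaluate $c_1(T_{\tY})\cdot\gamma$. First I would write down $-K_{\tY}$ on the original model $\bbP^1_k\times Y_k$: by adjunction for a product, $-K_{\bbP^1_k\times Y_k} = \big(\{0\}+\{\infty\}\big)\times Y_k + \bbP^1_k\times(-K_{Y_k})$, and since $-K_{Y_k}$ is represented by the cycle $D_k$ (the base change of the Looijenga boundary $D$), we get $-K_{\bbP^1_k\times Y_k}$ as the sum of the two horizontal divisors $\{0,\infty\}\times Y_k$ and the vertical divisor $\bbP^1_k\times D_k$. Each subsequent toric blowup $\tsX' \to \tsX$ along a toroidal stratum contributes its exceptional divisor $E$ to $-K$ with the usual discrepancy, but — and this is the key point — the divisors we actually pair against, namely the combinatorial data encoded in $\tbeta$ via \cref{def:class_beta}, are chosen so that $\gamma$ meets only the transforms of the original boundary divisors and the two designated divisors $\tD_1,\tD_2$. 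Concretely, a representative of $\gamma$ is a closed curve contained in $\tX_k\cup\tD_1\cup\tD_2$ meeting $\tD_1\cup\tD_2$ transversely, so it is disjoint from all other components of the total boundary $\tsX_s \cup \tH$; hence $\gamma$ pairs to zero with every exceptional divisor introduced by toroidal modification and with the horizontal divisors $\{0\}\times Y_k$, $\{\infty\}\times Y_k$ (the curve being proper over $\Spec k$, it avoids these "fibers at $0,\infty$" except possibly at $\tD_1,\tD_2$, which by construction lie over the interior). Therefore $-K_{\tY}\cdot\gamma$ equals $\gamma\cdot\tD_1 + \gamma\cdot\tD_2$ plus the contribution from the strict transform of $\bbP^1_k\times D_k$, the latter being exactly the degree of the image of $\gamma$ in $\NE(Y_k)$ against $-K_{Y_k} = [D_k]$.

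Now I plug in the numerical conditions of \cref{def:class_beta}: $\gamma\cdot\tD_1 = \gamma\cdot\tD_2 = 1$ by condition \eqref{item:class_beta:intersection}, while the image of $\gamma$ in $\NE(Y)$ is $\hbeta$, which lives in $\NE(X) = \NE(Y\setminus D)$-supported classes — more precisely, by the extension procedure producing $\hbeta = \beta + \beta'$, the curve $C$ representing $\gamma$ meets the boundary $\bbP^1_k\times D_k$ only along the portions already accounted for by $\tD_1,\tD_2$, so the residual pairing $[\bbP^1_k\times D_k]\cdot\gamma$ vanishes. This gives $-K_{\tY}\cdot\gamma = 1 + 1 + 0 = 2$, and hence the virtual dimension is $3 - 3 + n + 2 = n+2$. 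I expect the main obstacle to be the bookkeeping in the previous paragraph: one must argue carefully, using the local toric description of the toroidal modification \cref{rem:toric_description} together with the transversality and containment hypotheses in \cref{def:class_beta}, that $\gamma$ genuinely pairs to zero with every divisor in $\tH$ other than $\tD_1$ and $\tD_2$ — in particular with the strict transforms of $\bbP^1_k\times D_i$ — so that the only surviving intersection numbers are the two prescribed ones. Once that vanishing is established, the dimension count is immediate from the Riemann–Roch / deformation-theory formula for $\bcMon$.
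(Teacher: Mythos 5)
Your overall route is the same as the paper's: apply the genus-zero virtual dimension formula, so that everything reduces to showing $-K_{\tY}\cdot\gamma=2$ for every $\gamma\in\tbeta$, which the paper deduces directly from \cref{def:class_beta}. The problem is in your bookkeeping paragraph, at exactly the point where the real content lies. The divisors $\tD_1,\tD_2$ do \emph{not} ``lie over the interior'': the rays $r_1,r_2$ point in the unbounded directions of the extended tropical cylinder in $\tB=\R\times B$, and these directions have nonzero components in \emph{both} factors, so $\tD_1,\tD_2$ are themselves exceptional divisors of the toroidal modification $\tY\to\bbP^1_k\times Y_k$, lying over the boundary $(\{0,\infty\}\times Y_k)\cup(\bbP^1_k\times D_k)$ (typically over codimension-two strata such as $\{0\}\times(D_i\cap D_{i+1})$). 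This makes your assertion that ``$\gamma$ pairs to zero with every exceptional divisor introduced by toroidal modification'' inconsistent with the very conditions $\gamma\cdot\tD_1=\gamma\cdot\tD_2=1$ you use afterwards, and it leaves unproved the one fact your computation actually requires: that $\tD_1$ and $\tD_2$ occur in $-K_{\tY}$ with coefficient exactly $1$. ``The usual discrepancy'' cannot be waved away here: for a non-toroidal blowup the exceptional coefficient would be different (e.g.\ $-2$ for the blowup of a point of a threefold) and the count would fail.

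The missing idea is log-crepancy of toroidal modifications of the log Calabi--Yau pair. Since $K_{\bbP^1_k\times Y_k}+\big(\{0,\infty\}\times Y_k+\bbP^1_k\times D_k\big)\sim 0$ and the subdivision $C'_{\tB}$ produces a log-smooth (toroidal) model, the log canonical class is preserved with reduced boundary, so $-K_{\tY}$ is linearly equivalent to the reduced sum of \emph{all} the boundary divisors of $\tY\setminus\tX_k$; in particular $\tD_1$ and $\tD_2$ enter with coefficient $1$, and there is no separate ``strict transform of $\bbP^1_k\times D_k$'' term to worry about beyond the containment argument you already give. Granting this, conditions (i) and (ii) of \cref{def:class_beta} (a representative contained in $\tX_k\cup\tD_1\cup\tD_2$, meeting $\tD_1\cup\tD_2$ transversely) give $-K_{\tY}\cdot\gamma=\gamma\cdot\tD_1+\gamma\cdot\tD_2=2$, and the count $3-3+n+2=n+2$ follows exactly as in your first paragraph and in the paper's proof, which after the same dimension formula simply cites \cref{def:class_beta}(\ref{item:class_beta:intersection}) for $\gamma\cdot K_{\tY}=-2$.
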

\begin{proof}
For a smooth projective variety $V$ over a field, let $\bcMgn(V,\alpha)$ denote the moduli stack of $n$-pointed genus $g$ stable maps into $V$ with homology class $\alpha$.
The formula of virtual dimension (cf.\ \cite{Behrend_Gromov-Witten_1997,Li_Virtual_GW_algebraic_1998}) gives
\[(1-g)(\dim V -3) - \alpha\cdot K_V +n,\]
where $K_V$ denotes the canonical class of $V$.
Applying to our situation, we have $g=0$, $\dim\tY = 3$.
Moreover, for any $\gamma\in\tbeta$, we have $\gamma\cdot K_{\tY}=-2$ by \cref{def:class_beta}(\ref{item:class_beta:intersection}).
So the virtual dimension of $\bcMon(\tY,\tbeta)$ equals $n+2$.
\end{proof}

By \cite[Theorem 8.7]{Yu_Gromov_2014}, the moduli stack $\bcMon(\tY\an)$ of \kanal stable maps into $\tY\an$ is isomorphic to the analytification of the algebraic stack $\bcMon(\tY)$.

Let $\bcMon(\tY\an,\tbeta)$ denote the analytification of $\bcMon(\tY,\tbeta)$.
Let $\bcMon(\tY\an,\tbeta)_0$ denote the the substack consisting of stable maps such that the first marked point maps to $\tD\an_1$ and the second marked point maps to $\tD\an_2$.
Let $\bcMon(\tY\an,\tbeta,U_f)_0$ denote the substack of $\bcMon(\tY\an,\tbeta)_0$ consisting of \kanal stable maps whose composition with the retraction map $\ttau\colon\tY\an\to\tB'_f$ lies in $U_f$.

Let $W_1$ (resp.\ $W_2$) be a rational closed convex polyhedral neighborhood of $h(v_{10})$ (resp.\ $h(v_{20})$) satisfying the following conditions:
\begin{enumerate}[(i)]
\item $W_1\subset U(R)_f$, $W_2\subset U(R)_f$.
\item $W_1\cap\partial\tB'_f=\emptyset$, $W_2\cap\partial\tB'_f=\emptyset$.
\item $W_1 = \pi_1 (W_1)\times\pi_2 (W_1)$, $W_2 = \pi_1 (W_2) \times \pi_2(W_2)$. (Recall that $\pi_1\colon\tB\to\R$ and $\pi_2\colon\tB\to B$ denote the two projections.)
\item $O\notin \pi_2(W_1)$, $O\notin \pi_2(W_2)$, where $O$ is the origin of the tropical base $B$.
\end{enumerate}

By \cref{prop:extension_of_torus_fibration}, we can assume that the retraction map $\ttau\colon\tY\an\to\tB'_f$ is a trivial affinoid torus fibration over $W_1$ and $W_2$.
So $\ttau\inv(W_1)$ and $\ttau\inv(W_2)$ are affinoid domains inside $\tY\an$, which we denote by $\tW_1$ and $\tW_2$ respectively.
Let $A_1$ and $A_2$ be the $k$-affinoid algebras corresponding to $\tW_1$ and $\tW_2$ respectively.

\begin{lem}\label{lem:open_and_close}
Let $\cM$ be an analytic domain of $\bcMon(\tY\an,\tbeta)_0$.
Assume that the composition with the retraction map $\ttau\colon\tY\an\to\tB'_f$ of any \kanal stable map in $\cM$ hits the polyhedron $W_1$.
Let $\cM(U_f)\coloneqq\cM\cap\bcMon(\tY\an,\tbeta,U_f)_0$.
Then $\cM(U_f)$ is a union of connected components of $\cM$.
\end{lem}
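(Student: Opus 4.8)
The plan is to prove that $\cM(U_f)$ is open and closed in $\cM$; since a clopen subset of any topological space is a union of connected components, this yields the assertion.

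Both parts rest on the following observation. Let $\cC\to\cM$ denote the universal curve; it is proper. Hence, for any closed subset $F\subseteq\tB'_f$, the locus
\[
\cM_F\coloneqq\{\,f\in\cM\ :\ \Image(\ttau\circ f)\cap F\neq\emptyset\,\}
\]
is closed in $\cM$, being the image under the proper projection $\cC\to\cM$ of the closed subset of $\cC$ mapping into $F$ under the composite $\cC\to\tY\an\xrightarrow{\ttau}\tB'_f$. Taking $F=\tB'_f\setminus U_f$ shows that $\cM(U_f)=\cM\setminus\cM_F$ is open in $\cM$.

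It remains to see that $\cM(U_f)$ is closed, and here I would use the rigidity of tropical cylinders. I claim
\[
\cM(U_f)=\{\,f\in\cM\ :\ \Image(\ttau\circ f)\subseteq\oU(R)_f\,\}.
\]
As $\oU(R)_f$ is a compact union of bounded faces of $\tB'_f$, the right-hand side equals $\cM\setminus\cM_{F'}$ with $F'=\tB'_f\setminus\oU(R)_f$, hence is closed; so the claim completes the proof. The inclusion $\supseteq$ is immediate from $\oU(R)_f\subseteq U_f$. For $\subseteq$, let $f\in\cM$ with $\Image(\ttau\circ f)\subseteq U_f\subseteq U$. By the balancing property of tropicalizations of stable maps, combined with the constraints built into $\tbeta$---degree one over $\bbP^1_k$, and transversal intersection of multiplicity one with each of $\tD_1$ and $\tD_2$ (\cref{def:class_beta})---the set $\Image(\ttau\circ f)$ underlies an extended tropical cylinder $Z'$ in $\tB$ whose two unbounded edges have weight one; since $Z'\subseteq U$ and, by condition~(\ref{item:rigidity:polyhedral}) of \cref{prop:rigidity_of_tropical_cylinder}, $U$ is a thin tube around $h(\Gamma)$ along its unbounded rays, these edges must be parallel to $e_1$ and $e_2$, whence $A(Z')\le A(Z)<A_0$. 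The standing hypothesis that $\ttau\circ f$ hits $W_1$, together with the fact that the first marked point of $f$ maps to $\tD\an_1$, then forces the corresponding end of $Z'$ to pass through $p$. Thus $Z'$ satisfies all the hypotheses of part~(i) of \cref{prop:rigidity_of_tropical_cylinder}, so its image coincides with $h(\Gamma)$; consequently $\Image(\ttau\circ f)=h(\Gamma)\cap\tB'_f$, which by \cref{rem:rigidity_polyhedral} is contained in $U(R)_f\subseteq\oU(R)_f$. This proves $\subseteq$.

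The main obstacle is this last step---confirming that $\Image(\ttau\circ f)$ is genuinely an extended tropical cylinder in $\tB$ meeting the hypotheses of \cref{prop:rigidity_of_tropical_cylinder}, and in particular that it runs through the point $p$. This requires the tropicalization-of-stable-maps dictionary (balancing, and the local behaviour of the tropicalization at the marked points lying over $\tD_1$ and $\tD_2$) together with a careful comparison of the nested polyhedral neighbourhoods $W_1\subseteq U(R)_f\subseteq\oU(R)_f\subseteq U$ with the spine $L$. Everything else---openness, and the reduction of ``$\Image(\ttau\circ f)\subseteq U_f$'' to ``$\Image(\ttau\circ f)\subseteq\oU(R)_f$'' once rigidity is available---is formal.
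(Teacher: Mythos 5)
Your openness step is fine in spirit (and is actually a slightly different, more topological route than the paper's), but the closedness step has a genuine gap. Your only general observation is: for $F$ \emph{closed} in $\tB'_f$, the locus $\cM_F$ of maps whose image meets $F$ is closed, because it is the image of a closed set under the proper projection from the universal curve. From this you can conclude that ``image avoids a closed set'' (equivalently ``image contained in the open set $U_f$'') is an open condition, which is your first paragraph. But in the second paragraph you apply it with $F'=\tB'_f\setminus\oU(R)_f$, which is \emph{open}, not closed, so nothing follows; what you actually need is that ``image contained in the closed set $\oU(R)_f$'' is a closed condition, and this is \emph{not} a consequence of topological properness of $\cC\to\cM$ alone. (For a proper family whose fibers jump, the containment locus can fail to be closed: take the family $([0,1]\times\{0\})\cup(\{0\}\times[0,1])\to[0,1]$ with $Z=\{y=0\}$; the containment locus is $(0,1]$.) To repair it along your lines you would need openness of the projection from the universal curve on the underlying topological spaces, which you neither state nor prove and which is a nontrivial point for $k$-analytic stacks.

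The paper avoids this entirely by the non-archimedean specialization mechanism, which is the real content of the lemma: it passes to the formal model $\tfX$ of $\tsX$, uses the identifications $\bcMon(\tfX)_s\simeq\bcMon(\tsX_s)$ and $\bcMon(\tfX)_\eta\simeq\bcMon(\tY\an)$ to get a reduction map $\pi_\cM\colon\bcMon(\tY\an)\to\bcMon(\tsX_s)$, and observes that the condition ``$\ttau\circ f$ lies in $U(R)_f$'' (resp.\ in $\oU(R)_f$) is the preimage under $\pi_\cM$ of the substack of stable maps into the \emph{closed} subscheme $D(U(R)_f)$ (resp.\ the \emph{open} subscheme $D(\oU(R)_f)$) of $\tsX_s$; by anticontinuity of reduction this makes $\cM(U(R)_f)$ open and $\cM(\oU(R)_f)$ closed in $\cM$. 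Rigidity (\cref{prop:rigidity_of_tropical_cylinder}), via the hypothesis that every map hits $W_1$, then identifies $\cM(U(R)_f)=\cM(\oU(R)_f)=\cM(U_f)$, which is clopen. Your use of rigidity is in the same spirit as the paper's (though your claim that the end of $Z'$ must pass through the specific point $p$ of the rigidity proposition is not justified by the hypothesis of hitting $W_1$ alone; what one really extracts is containment of the image in $U(R)_f$). The missing ingredient in your proposal is precisely the reduction-map/anticontinuity argument (or a proved substitute for it) that yields closedness.
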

\begin{proof}
Let $\cM(U(R)_f)$ (resp.\ $\cM(\oU(R)_f)$) denote the substack of $\cM(U_f)$ consisting of \kanal stable maps such that if we compose them with the retraction map $\ttau\colon\tY\an\to\tB'_f$, the images lie in $U(R)_f$ (resp.\ $\oU(R)_f$).

Let $I_{\tsX}$ denote the set of irreducible components of the special fiber $\tsX_s$.
For a subset $I\subset I_{\tsX}$, we denote by $\Delta^I$ the corresponding simplex in the Clemens polytope $\tB'_f$, by $(\Delta^I)^\circ$ its relative interior, and by $D^\circ_I$ the corresponding open stratum in $\tsX_s$.
Set
\[D(U(R)_f) \coloneqq \bigcup_{\substack{I\subset I_{\tsX}\\ (\Delta^I)^\circ \subset U(R)_f}} D_I^\circ,\qquad
D(\oU(R)_f) \coloneqq \bigcup_{\substack{I\subset I_{\tsX}\\ (\Delta^I)^\circ \subset \oU(R)_f}} D_I^\circ.\]
We note that $D(U(R)_f)$ is a closed subscheme of $\tsX_s$, and $D(\oU(R)_f)$ is an open subscheme of $\tsX_s$.

Let $\tfX$ denote the formal completion of $\tsX$ along the special fiber $\tsX_s$.
We note that $\tfX$ is a formal scheme proper and flat over the ring of integers $\kc$.
We have $\tfX_s \simeq \tsX_s$ and $\tfX_\eta \simeq \tY\an$ (cf.\ \cite{Berkovich_Vanishing_1994}).

Let $\bcMon(\tfX)$ denote the moduli stack of $n$-pointed rational formal stable maps into $\tfX$.
Let $\bcMon(\tfX_s)$ denote the moduli stack of $n$-pointed rational algebraic stable maps into $\tfX_s$.
By \cite[Proposition 8.6]{Yu_Gromov_2014}, we have
\[\bcMon(\tfX)_s\simeq \bcMon(\tfX_s) \simeq \bcMon(\tsX_s).\]
By \cite[Theorem 8.9]{Yu_Gromov_2014}, we have
\[\bcMon(\tfX)_\eta\simeq \bcMon(\tfX_\eta) \simeq \bcMon(\tY\an).\]
So we obtain a reduction map $\pi_\cM\colon\bcMon(\tY\an)\to\bcMon(\tsX_s)$.

Let $\bcMon(\tsX_s,D(U(R)_f))$ denote the closed substack of $\bcMon(\tsX_s)$ consisting of stable maps which map into $D(U(R)_f)$.

Let $\bcMon(\tsX_s,D(\oU(R)_f))$ denote the open substack of $\bcMon(\tsX_s)$ consisting of stable maps which map into $D(\oU(R)_f)$.

We have two isomorphisms
\begin{align*}
\pi\inv_\cM\big(\bcMon(\tsX_s,D(U(R)_f))\big)\cap\cM\simeq\cM(U(R)_f),\\
\pi\inv_\cM\big(\bcMon(\tsX_s,D(\oU(R)_f))\big)\cap\cM\simeq\cM(\oU(R)_f).
\end{align*}
Therefore, $\cM(U(R)_f)$ is an open substack of $\cM$, and $\cM(\oU(R)_f)$ is a closed substack of $\cM$.

The inclusions $U(R)_f\subset\oU(R)_f\subset U_f$ induce natural inclusions
\begin{equation}\label{eq:inclusions}
\cM(U(R)_f)\subset\cM(\oU(R)_f)\subset\cM(U_f).
\end{equation}
By the assumption that the composition with the retraction map $\ttau\colon\tY\an\to\tB'_f$ of any \kanal stable map in $\cM$ hits the polyhedron $W_1$, \cref{prop:rigidity_of_tropical_cylinder} implies that the inclusions in \eqref{eq:inclusions} induce natural isomorphisms
\[\cM(U(R)_f)\simeq\cM(\oU(R)_f)\simeq\cM(U_f).\]
Therefore, $\cM(U_f)$ is at the same time an open substack and a closed substack of $\cM$.
We conclude that $\cM(U_f)$ is a union of connected components of $\cM$.
\end{proof}

\begin{thm} \label{thm:properness_of_moduli}
Consider the two fiber products
\[\bcMon(\tY\an,\tbeta)_0\times_{\tY\an} \tW_1\quad \text{and}\quad\bcMon(\tY\an,\tbeta,U_f)_0\times_{\tY\an} \tW_1\]
given by the evaluation morphism $\ev_3$ of the third marked point.
Then the latter fiber product is a union of connected components of the former fiber product.
Therefore, the evaluation morphism $\ev_3\colon\bcMon(\tY\an,\tbeta,U_f)_0\to\tY\an$ is proper over $\tW_1$.
\end{thm}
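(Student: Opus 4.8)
The plan is to deduce the statement from Lemma~\ref{lem:open_and_close} together with the properness of the moduli stack of stable maps and the fact that the third marked point's evaluation lands in $\tW_1$. First I would set $\cM\coloneqq\bcMon(\tY\an,\tbeta)_0\times_{\tY\an}\tW_1$, the fiber product along $\ev_3$. Since the algebraic stack $\bcMon(\tY,\tbeta)$ is proper (it is the usual Kontsevich moduli stack of stable maps with bounded class), its analytification $\bcMon(\tY\an,\tbeta)$ is a proper \kanal stack by the GAGA-type comparison of \cite[Theorem 8.7]{Yu_Gromov_2014} and \cite{Porta_Yu_Higher_analytic_stacks_2014}. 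The substack $\bcMon(\tY\an,\tbeta)_0$ cut out by the incidence conditions $\ev_1\in\tD_1\an$, $\ev_2\in\tD_2\an$ is closed, hence still proper; and $\tW_1=\ttau\inv(W_1)$ is an affinoid domain, so $\cM$, being a fiber product of a proper stack with an affinoid domain over the separated stack $\tY\an$, is itself a compact (proper over $\mathcal M(A_1)$) \kanal stack. This gives us a well-behaved ambient object in which to locate the open-and-closed piece.

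Next I would verify that $\cM$ satisfies the hypothesis of Lemma~\ref{lem:open_and_close}. By construction every stable map in $\cM$ has its third marked point mapping into $\tW_1=\ttau\inv(W_1)$, so its composition with $\ttau$ hits $W_1$; thus $\cM$ is an analytic domain (indeed a closed analytic substack) of $\bcMon(\tY\an,\tbeta)_0$ for which the lemma applies verbatim. Therefore $\cM(U_f)\coloneqq\cM\cap\bcMon(\tY\an,\tbeta,U_f)_0$ is a union of connected components of $\cM$. Unwinding the definition of $\cM(U_f)$, a stable map in $\cM$ lies in $\cM(U_f)$ precisely when its composition with $\ttau$ lands in $U_f$, which is exactly the condition defining $\bcMon(\tY\an,\tbeta,U_f)_0$; hence $\cM(U_f)=\bcMon(\tY\an,\tbeta,U_f)_0\times_{\tY\an}\tW_1$. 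This establishes the first assertion: the latter fiber product is a union of connected components of the former.

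For the final sentence, properness of $\ev_3\colon\bcMon(\tY\an,\tbeta,U_f)_0\to\tY\an$ over $\tW_1$ means that the base change $\bcMon(\tY\an,\tbeta,U_f)_0\times_{\tY\an}\tW_1\to\tW_1$ is proper. By the previous paragraph this base change is a union of connected components of $\cM=\bcMon(\tY\an,\tbeta)_0\times_{\tY\an}\tW_1$, which we have already argued is proper over $\tW_1$ (equivalently, proper over the affinoid $\mathcal M(A_1)$). A union of connected components of a proper \kanal stack is again proper — it is closed in the total space and the structure map remains proper — so the claim follows. The main obstacle I anticipate is the first paragraph: one must make precise the sense in which $\cM$ is ``proper over $\tW_1$'' in the \kanal-stacks framework of \cite{Yu_Gromov_2014,Porta_Yu_Higher_analytic_stacks_2014}, i.e.\ check that the analytification of a proper algebraic stack is proper, that the closed incidence conditions preserve properness, and that fiber product with an affinoid domain over a separated target behaves well; once that bookkeeping is in place, the rest is a formal application of Lemma~\ref{lem:open_and_close}.
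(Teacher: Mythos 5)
Your proposal is correct and follows essentially the same route as the paper: both define $\cM\coloneqq\bcMon(\tY\an,\tbeta)_0\times_{\tY\an}\tW_1$, apply \cref{lem:open_and_close} to identify $\bcMon(\tY\an,\tbeta,U_f)_0\times_{\tY\an}\tW_1$ with a union of connected components of $\cM$, and then deduce properness over $\tW_1$ from properness of $\bcMon(\tY,\tbeta)$ via analytification and base change. Your explicit verification that the third marked point forces every map in $\cM$ to hit $W_1$ is a harmless (and welcome) elaboration of a step the paper leaves implicit.
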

\begin{proof}
Let $\cM\coloneqq\bcMon(\tY\an,\tbeta)_0\times_{\tY\an} \tW_1$ and $\cM(U_f)\coloneqq\cM\cap\bcMon(\tY\an,\tbeta,U_f)_0$.
We have $\cM(U_f)\simeq\bcMon(\tY\an,\tbeta,U_f)_0\times_{\tY\an} \tW_1$.
\cref{lem:open_and_close} implies that $\cM(U_f)$ is a union of connected components of $\cM$.
So we have proved the first statement of the theorem.

Since $\bcMon(\tY,\tbeta)$ is a proper algebraic stack, its analytification $\bcMon(\tY\an,\tbeta)$ is a proper $k$-analytic stack (cf.\ \cite[Proposition 6.4]{Porta_Yu_Higher_analytic_stacks_2014}).
So the closed substack $\bcMon(\tY\an,\tbeta)_0$ is also proper.
By base change, the \kanal stack $\cM$ is proper over $\tW_1$.
Therefore, the first statement of the theorem implies that the \kanal stack $\cM(U_f)$ is proper over $\tW_1$.
\end{proof}

\begin{cor}
The \kanal stack
\[\bcMon(\tY\an,\tbeta,U_f)_0 \times_{\tY\an} \tW_1\]
is isomorphic to the analytification of an algebraic stack proper over $\Spec A_1$, which we denote by $\bcMon(\tY\an,\tbeta,U_f,W_1)_0\alg$.
\end{cor}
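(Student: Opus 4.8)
The plan is to deduce the corollary directly from the GAGA theorem for non-archimedean analytic stacks of \cite{Porta_Yu_Higher_analytic_stacks_2014} (see also \cite[\S 6]{Yu_Gromov_2014}), using \cref{thm:properness_of_moduli} to supply its one nontrivial hypothesis, namely properness over the affinoid $\tW_1=\Sp A_1$. The content of the corollary is then only to transport a proper $k$-analytic stack over $\Sp A_1$ back to an algebraic stack over $\Spec A_1$.

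First I would record the algebro-geometric input. Since $\tD_1$ and $\tD_2$ are effective Cartier divisors in $\tY$, the locus $\ev_1\inv(\tD_1)\cap\ev_2\inv(\tD_2)$ is a closed substack of the proper algebraic $k$-stack $\bcMon(\tY,\tbeta)$, hence is itself proper over $k$; and because analytification commutes with fiber products and with closed immersions, its analytification is exactly $\bcMon(\tY\an,\tbeta)_0$. Thus $\bcMon(\tY\an,\tbeta)_0$ is a proper $k$-analytic stack which is, furthermore, quasi-compact, quasi-separated and locally of finite presentation over $k$. Since $\tY=(\tsX)_\eta$ is proper over $k$ --- being the generic fiber of the proper flat $\kc$-scheme $\tsX$ --- the evaluation $\ev_3\colon\bcMon(\tY,\tbeta)_0\to\tY$ is a morphism of proper $k$-stacks and hence proper, and likewise for its analytification $\ev_3\colon\bcMon(\tY\an,\tbeta)_0\to\tY\an$. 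Base changing along the affinoid domain $\tW_1=\ttau\inv(W_1)\hookrightarrow\tY\an$ then exhibits $\cM\coloneqq\bcMon(\tY\an,\tbeta)_0\times_{\tY\an}\tW_1$ as a proper $k$-analytic stack over $\tW_1=\Sp A_1$, still quasi-compact, quasi-separated and locally of finite presentation over $\Sp A_1$. By \cref{thm:properness_of_moduli} the substack $\cM(U_f)\coloneqq\bcMon(\tY\an,\tbeta,U_f)_0\times_{\tY\an}\tW_1$ is a union of connected components of $\cM$, so it inherits all these properties; in particular it is a proper $k$-analytic stack over $\Sp A_1$.

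Since $A_1$ is a $k$-affinoid algebra, $\Spec A_1$ is Noetherian, and I would now invoke the existence part of the GAGA equivalence of \cite{Porta_Yu_Higher_analytic_stacks_2014}: every proper $k$-analytic stack over $\Sp A_1$ is the analytification of a proper algebraic stack over $\Spec A_1$, unique up to unique isomorphism. Applying this to $\cM(U_f)$ produces the algebraic stack asserted in the statement, which we denote $\bcMon(\tY\an,\tbeta,U_f,W_1)_0\alg$; the uniqueness also shows that this stack does not depend, up to canonical isomorphism, on the auxiliary choices of $A_0$, of the subdivision $C'_{\tB}$, and of the set $R$.

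The only step I expect to require genuine care is checking that $\cM(U_f)$ literally satisfies the running hypotheses of the GAGA statement of loc.\ cit.\ --- that it is a geometric $k$-analytic stack, quasi-compact and quasi-separated over the Noetherian affinoid $\Sp A_1$, with ``proper'' read in the sense used there. As sketched above, all of this follows formally from the observation that $\cM(U_f)$ is a union of connected components of the analytification of a fiber product of finite-type algebraic $k$-stacks, base changed to $\Sp A_1$; so no new geometric input enters, and the substance of the argument is carried entirely by the GAGA theorem.
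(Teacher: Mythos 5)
Your reduction to ``$\cM(U_f)\coloneqq\bcMon(\tY\an,\tbeta,U_f)_0\times_{\tY\an}\tW_1$ is a proper \kanal stack over $\Sp A_1$, hence algebraizable by GAGA'' has a genuine gap at the second step. The GAGA theorem of \cite[Theorem 7.4]{Porta_Yu_Higher_analytic_stacks_2014} is an equivalence of categories of \emph{coherent sheaves} on a proper \emph{algebraic} stack over $\Spec A_1$ and on its analytification; it contains no ``existence'' or algebraization statement saying that every proper \kanal stack over $\Sp A_1$ is the analytification of an algebraic stack. Such a statement is not available in the cited reference and is false in the generality you invoke it (already for proper non-archimedean analytic \emph{spaces} there are non-algebraizable examples, e.g.\ Hopf-type surfaces), so properness over $\Sp A_1$ alone cannot carry the argument. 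The essential input you are missing is that the \emph{ambient} stack $\cM\coloneqq\bcMon(\tY\an,\tbeta)_0\times_{\tY\an}\tW_1$ is already, by construction, the analytification of an algebraic stack proper over $\Spec A_1$ (namely the base change of $\bcMon(\tY,\tbeta)_0$ along $\Spec A_1\to\tY$), and that \cref{thm:properness_of_moduli} exhibits $\cM(U_f)$ as a union of connected components of $\cM$ --- not merely as something proper.

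Once this is in place, the paper descends $\cM(U_f)$ to the algebraic side by applying coherent-sheaf GAGA to the ambient algebraic stack: the pushforward $\cF$ of the structure sheaf of $\cM(U_f)$ to $\cM$ is coherent (because the inclusion is clopen, by \cref{thm:properness_of_moduli}), so by \cite[Theorem 7.4]{Porta_Yu_Higher_analytic_stacks_2014} it is the analytification of an algebraic coherent sheaf $\cF\alg$, which cuts out a closed substack that is in fact a union of connected components of the algebraic model, is proper over $\Spec A_1$, and analytifies back to $\cM(U_f)$. Your properness bookkeeping in the first paragraph is fine and consistent with this, but the algebraization must be routed through the algebraicity of the ambient moduli stack and the coherent-sheaf form of GAGA rather than through a nonexistent ``every proper analytic stack over an affinoid is algebraic'' principle.
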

\begin{proof}
Let $\cF$ denote the pushforward of the structure sheaf of the \kanal stack $\bcMon(\tY\an,\tbeta,U_f)_0 \times_{\tY\an} \tW_1$ to the \kanal stack $\bcMon(\tY\an,\tbeta)_0\times_{\tY\an} \tW_1$.
By \cref{thm:properness_of_moduli}, $\cF$ is a coherent sheaf on the \kanal stack $\bcMon(\tY\an,\tbeta)_0\times_{\tY\an} \tW_1$.
By construction, the \kanal stack $\bcMon(\tY\an,\tbeta)_0\times_{\tY\an} \tW_1$ is the analytification of an algebraic stack over $\Spec A_1$, which we denote by $\bcMon(\tY\an,\tbeta,W_1)_0\alg$.
Now we apply the GAGA theorem for \kanal stacks (\cite[Theorem 7.4]{Porta_Yu_Higher_analytic_stacks_2014}) to the algebraic stack $\bcMon(\tY\an,\tbeta,W_1)_0\alg$ over $\Spec A_1$.
We deduce that $\cF$ is isomorphic to the analytification of an algebraic coherent sheaf $\cF\alg$ on $\bcMon(\tY\an,\tbeta,W_1)_0\alg$.
The algebraic coherent sheaf $\cF\alg$ defines a closed substack of $\bcMon(\tY\an,\tbeta,W_1)_0\alg$, which we denote by $\bcMon(\tY\an,\tbeta,U_f,W_1)_0\alg$.
We note that $\bcMon(\tY\an,\tbeta,U_f,W_1)_0\alg$ is a union of connected components of $\bcMon(\tY\an,\tbeta,W_1)_0\alg$.
It is proper over $\Spec A_1$ because $\bcMon(\tY\an,\tbeta,W_1)_0\alg$ is proper over $\Spec A_1$.
By construction, the analytification of $\bcMon(\tY\an,\tbeta,U_f,W_1)_0\alg$ is the \kanal stack $\bcMon(\tY\an,\tbeta,U_f)_0 \times_{\tY\an} \tW_1$ we started with,
so we have proved the corollary.
\end{proof}

\bigskip
With the preparations above, we are ready to give the definition of the number $N(L,\beta)$.

Choose a rigid point $p$ of $\tW_1$.
We specialize to the case $n=3$.
Let
\[\bcM_{0,3}(\tY\an,\tbeta,U_f,p)_0\alg\]
denote the fiber of the morphism $\ev_3\colon\bcM_{0,3}(\tY\an,\tbeta,U_f,W_1)_0\alg\to\Spec A_1$ over $p$.
It is a proper algebraic stack over the residue field at $p$.
Let $V_p$ denote the restriction of the virtual fundamental class of $\bcM_{0,3}(\tY,\tbeta)_0$ to $\bcM_{0,3}(\tY\an,\tbeta,U_f,p)_0\alg$.
By \cref{lem:virtual_dim}, the cycle $V_p$ is of dimension zero.

\begin{defin}
We define the \emph{number of holomorphic cylinders} $N(L,\beta)$ to be the degree of the 0-dimensional cycle $V_p$.
\end{defin}

\begin{rem}
In the construction of the number $N(L,\beta)$, we used the affinoid domain $\tW_1\subset\tY\an$.
This raises a natural question: if we choose to use the affinoid domain $\tW_2\subset\tY\an$, do we obtain the same number?
It is not clear at all à priori.
We will give an affirmative answer to this question in \cref{sec:symmetry}.
\end{rem}

By construction, the number $N(L,\beta)$ does not depend on the choice of the point $p$.
Moreover, we have the following property of deformation invariance.

\begin{prop}[Deformation invariance]
Let $L_t,\, t\in[0,1]$ be a continuous deformation of extendable spines in the tropical base $B$.
Assume that the associated extended spines $\hL_t$ also deforms continuously.
Then we have $N(L_0,\beta)=N(L_1,\beta)$ for any $\beta\in\NE(Y)$.
\end{prop}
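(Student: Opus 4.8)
The plan is to show that the function $t\mapsto N(L_t,\beta)$ is locally constant on $[0,1]$; since $[0,1]$ is connected, this immediately gives $N(L_0,\beta)=N(L_1,\beta)$.

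Fix $t_0\in[0,1]$. The main step is to arrange that all of the auxiliary data entering the construction of \cref{sec:hol_cylinders} can be chosen \emph{simultaneously} for every $L_t$ with $t$ in a neighbourhood $J=[0,1]\cap(t_0-\varepsilon,t_0+\varepsilon)$ of $t_0$. The integral tangent directions $w_v(e)$ of the (extended) spine, the combinatorial types of $L_t$, of the associated extended spine $\hL_t$, and of the extended tropical cylinder $Z_t$ in $\tB$, together with the curve class $\beta'_t\in\NE(Y)$ associated to the extension, all take values in discrete sets; since by hypothesis $t\mapsto L_t$ and $t\mapsto\hL_t$, hence $t\mapsto Z_t$, are continuous, all these data are constant on $J$ once $\varepsilon$ is small enough. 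In particular $\hbeta_t=\beta+\beta'_t$ equals a fixed class $\hbeta$ on $J$, the rays $r_1,r_2$ are fixed, and, using continuity of $t\mapsto A(Z_t)$, we may fix one real number $A_0>\sup_{t\in J}A(Z_t)$. Shrinking $\varepsilon$ further, the images $h_t(\Gamma_t)$ all lie in a single open subset of $\tB$, so by \cref{prop:rigidity_of_tropical_cylinder} and \cref{rem:rigidity_polyhedral} we may select a single regular simplicial subdivision $C'_{\tB}$ of $C_{\tB}$, a single set $R$ of one-dimensional cones, and a single open neighbourhood $U$ such that for every $t\in J$ the pair $(A_0,U)$ satisfies conclusions (i) and (ii) of \cref{prop:rigidity_of_tropical_cylinder} for $Z_t$ and $h_t(\Gamma_t)\subset U(R)\subset\oU(R)\subset U$. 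These choices fix an snc log-model $(\tsX,\tH)$ of $\tX_k$, hence $\tY$, $\tD_1$, $\tD_2$, the finite set $\tbeta\subset\NE(\tY)$, the retraction $\ttau$, and the open set $U_f$. Finally we pick rational closed convex polyhedral neighbourhoods $W_1,W_2$ containing $h_t(v_{10}),h_t(v_{20})$ for all $t\in J$ and satisfying conditions (i)--(iv) in the construction, together with a rigid point $p\in\tW_1=\ttau^{-1}(W_1)$.

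With all of this fixed, none of the objects $\bcM_{0,3}(\tY\an,\tbeta,U_f)_0$, the algebraic stack $\bcM_{0,3}(\tY\an,\tbeta,U_f,W_1)_0\alg$, its fibre over $p$, nor the $0$-dimensional cycle $V_p$ obtained by restricting the virtual fundamental class of $\bcM_{0,3}(\tY,\tbeta)_0$ to that fibre, refers to $t$ any longer: each is manufactured purely from the common auxiliary data, whose admissibility as a choice for $L_t$ is exactly what the first step guarantees. Hence $N(L_t,\beta)=\deg V_p$ is one and the same number for every $t\in J$. Thus $t\mapsto N(L_t,\beta)$ is locally constant, and we conclude.

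The hard part is the first step, and precisely the claim that the discrete data, above all the curve class $\beta'_t$ attached to the extension of $L_t$, is genuinely locally constant. The delicate point is that an abstract continuous deformation of extendable spines could a priori pass through a configuration in which a one-sided extension ray $r_i$ meets a vertex of $B$ or becomes tangent to a boundary face $\Delta_i$, which would change which faces the iterated extension bounces off, the lattice lengths $\mu$, and hence $\beta'_t$. The hypothesis that the extended spines $\hL_t$ also deform continuously is exactly what forbids such jumps, and the proof must spell out why continuity of $t\mapsto\hL_t$ forces the full combinatorial type of the iterated extension, together with all the lattice lengths entering $\beta'_t$, to be locally constant. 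Once that is in place, the rest is the routine observation that the properness statement \cref{thm:properness_of_moduli}, the GAGA argument in its corollary, and the virtual-class construction all produce, from fixed auxiliary data, a single moduli stack and a single virtual count, independent of the parameter $t$.
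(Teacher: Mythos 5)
Your proof is correct and follows essentially the same route as the paper's: the paper likewise fixes $t$, uses the neighborhood $U_t$ from \cref{prop:rigidity_of_tropical_cylinder} as common auxiliary data for all nearby $Z_{t'}$ (so that "by construction" the counts agree), and concludes by compactness/connectedness of $[0,1]$. Your more detailed justification of why the discrete data — in particular the extension class $\beta'_t$ — is locally constant under the hypothesis on $\hL_t$ is a point the paper leaves implicit, but it is not a different argument.
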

\begin{proof}
For all $t\in[0,1]$, let $Z_t$ denote the extended tropical cylinder in $\tB$ associated to the extended spine $\hL_t$,
and let $U_t$ denote the open neighborhood constructed in \cref{prop:rigidity_of_tropical_cylinder} for the extended tropical cylinder $Z_t$.
Now fix $t\in[0,1]$.
By continuity, there exists $\epsilon>0$, such that for any $t'\in (t-\epsilon,t+\epsilon)\cap[0,1]$, the extended tropical cylinder $Z_{t'}$ lies in $U_t$.
By construction, we have $N(L_t,\beta)=N(L_{t'},\beta)$ for all $t'\in (t-\epsilon,t+\epsilon)\cap[0,1]$.
Now the proposition follows from the compactness of the interval $[0,1]$.
\end{proof}

\begin{rem}\label{rem:curve_class}
	The curve class $\beta$ does not play a big role in this paper.
	Nevertheless, we remark that in our construction, $\beta$ represents exactly the class of our holomorphic cylinders in the sense of the following definition.
\end{rem}

\begin{defin}
	Let $C$ be a compact quasi-smooth strictly $k$-analytic curve and $f\colon C\to Y\an_{k}$ a morphism.
	Up to passing to a finite ground field extension, we can choose a strictly semistable formal model $\fC$ of $C$ over $\kc$ such that $f\colon C\to Y\an_{k}$ extends to a morphism $\ff\colon\fC\to\widehat Y_{\kc}$, where $\widehat Y_{\kc}$ denotes the formal completion of $Y_{\kc}$ along its special fiber.
	Let $\fC^p_s$ be the union of proper irreducible components of the special fiber $\fC_s$ of $\fC$.
	We define $[\ff_s(\fC^p_s)]\in\NE(Y)$ to be the class of $f$.
	Since two different choices of the model $\fC$ can always be dominated by another model, we see that the curve class does not depend on the choice.
	If the domain curve $C$ is nodal, we make the definition after taking normalization.
\end{defin}

\section{The symmetry property} \label{sec:symmetry}

In \cref{sec:hol_cylinders}, we have defined the number of holomorphic cylinders $N(L,\beta)$ given a spine $L=(\Gamma_0,(v_{10},v_{20}),h_0)$ and a class $\beta\in\NE(Y)$.

A natural question is whether the number $N(L,\beta)$ depends on the orientation of the spine $L$.
We give an affirmative answer in \cref{thm:symmetry}.
We start with two lemmas concerning Berkovich non-archimedean analytic spaces.

\begin{lem}\label{lem:maximum_modulus}
let $U$ be a $k$-affinoid space and $f$ a holomorphic function on $U$.
If the norm $|f|$ reaches maximum at a point $x_0$ in the interior $\Int(U)$ of $U$,
then there exists a neighborhood $V$ of $x_0$ in $U$ such that $|f|$ is constant on $V$.
\end{lem}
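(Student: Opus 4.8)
The plan is to reduce the statement to the one-dimensional case, by viewing $f$ as a morphism to a closed disc and exploiting that $x_0$ is an interior point. If $c:=|f(x_0)|=\sup_{x\in U}|f(x)|$ equals $0$ then $f$ vanishes identically and there is nothing to prove, so assume $c>0$. Since $x_0\in\Int(U)$, by the very definition of the relative interior there is a $k$-affinoid neighbourhood $V$ of $x_0$ in $U$, functions $g_1,\dots,g_n\in\cO(V)$ and radii $s,r_1,\dots,r_n>0$ such that $(f|_V,g_1,\dots,g_n)$ induces a closed immersion $\iota\colon V\hookrightarrow D_s\times D_{r_1}\times\cdots\times D_{r_n}$ into a product of closed discs over $k$ (here $D_\rho$ is the closed disc of radius $\rho$, $T$ denotes the coordinate on the first factor $D_s$, so that $T\circ\iota=f|_V$), and such that $\iota(x_0)$ lies in the open polydisc; in particular $s>|f(x_0)|=c$. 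Because $\sup_V|f|\le\sup_U|f|=c$ and $\sup_V|g_i|\le r_i$, the image $\iota(V)$ is contained in the subspace $D_c\times D_{r_1}\times\cdots\times D_{r_n}$, hence $\iota$ is in fact a closed immersion $V\hookrightarrow D_c\times D_{r_1}\times\cdots\times D_{r_n}$, and the point $y_0:=\iota(x_0)$ has first coordinate of absolute value exactly $c$.

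Next I would invoke three standard facts from Berkovich's theory: the relative interior over $k$ is preserved under closed immersions and under finite products, and $\Int(D_\rho/k)=D_\rho\setminus\{\eta_\rho\}$, where $\eta_\rho$ is the Gauss point of $D_\rho$ (the unique point of its relative boundary over $k$). Since $x_0\in\Int(U)$ and $V$ is a neighbourhood of $x_0$, also $x_0\in\Int(V)$; therefore $y_0\in\Int\big(D_c\times D_{r_1}\times\cdots\times D_{r_n}\big)$, and projecting to the first factor gives $f(x_0)\in D_c\setminus\{\eta_c\}$, while $|T(f(x_0))|=|f(x_0)|=c$. The connected components of the open set $D_c\setminus\{\eta_c\}$ are the residue discs of $D_c$ at $\eta_c$; on each of them other than $\{|T|<c\}$ the function $|T|$ is identically $c$ by the ultrametric inequality. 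As $|T(f(x_0))|=c$, the component $\Omega$ of $D_c\setminus\{\eta_c\}$ containing $f(x_0)$ is one of those, so $|T|\equiv c$ on $\Omega$. Hence $V':=(f|_V)\inv(\Omega)$ is an open neighbourhood of $x_0$ in $U$ on which $|f|=|T\circ f|$ is constant equal to $c$, which is the assertion.

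The cited facts — invariance of the relative interior under closed immersions and products, and $\Int(D_\rho/k)=D_\rho\setminus\{\eta_\rho\}$ — are all classical (Berkovich's book), and the topology of $D_c\setminus\{\eta_c\}$ is elementary, so I do not anticipate a serious obstruction. The one step that needs genuine care is the passage in the first paragraph from a presentation in which the ``$f$-radius'' $s$ merely satisfies $s>c$ to the subspace $D_c\times D_{r_1}\times\cdots\times D_{r_n}$: it is precisely this move that pins the first coordinate of $\iota(x_0)$ to have modulus exactly $c$ (rather than only $<s$), which is what makes the structure of $D_c\setminus\{\eta_c\}$ yield local constancy of $|f|$ with the correct value; everything after that is formal. (As an alternative one could instead argue via the reduction map, choosing a formal $\kc$-model of $U$ in which $x_0$ specializes to a closed point of the special fibre — possible exactly because $x_0$ is interior — and checking, through the anticontinuity of the reduction map and a reduction-of-$f$ computation, that $|f|$ is constant on the inverse image of that point.)
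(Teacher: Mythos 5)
Your argument is correct and is essentially the paper's proof: the paper applies Berkovich's transitivity formula $\Int(U)=\Int(U/D)\cap f\inv(\Int(D))$ (Proposition 2.5.8(iii)) directly to the morphism $f\colon U\to D$, where $D$ is the closed disc of radius exactly $c=|f(x_0)|$ (possible precisely because the maximum is attained), deduces $f(x_0)\in\Int(D)=D\setminus\{\eta_c\}$, and concludes via the component structure of $D\setminus\{\eta_c\}$ exactly as you do. Your detour through the closed immersion $(f|_V,g_1,\dots,g_n)$ into a polydisc is superfluous: the inclusion $\Int(V/k)\subseteq\varphi\inv(\Int(X/k))$ holds for an arbitrary morphism $\varphi$ of $k$-affinoid spaces by the same transitivity formula, so nothing beyond the single map $f\colon U\to D$ is needed.
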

\begin{proof}
Let $D$ be the closed disc of radius $|f(x_0)|$.
Then $f$ defines a morphism $U\to D$.
By \cite[Proposition 2.5.8(iii)]{Berkovich_Spectral_1990}, we have the formula
\[ \Int(U) = \Int(U/D)\cap f\inv(\Int(D)).\]
Since $x_0\in\Int(U)$ by assumption, we deduce that $f(x_0)\in\Int(D)$.
Let $V_0$ be the connected component of $\Int(D)$ that contains $f(x_0)$.
Then $V\coloneqq f\inv(V_0)$ is a neighborhood of $x_0$ in $U$ such that $|f|$ is constant on $V$, completing the proof of the lemma.
\end{proof}

\begin{lem}\label{lem:function_of_norm_1}
Let $M$ be a connected \kanal space without boundary, $m_0\in M$ a rigid point and $f$ a holomorphic function on $M$.
Assume that the norm of $f$ is constantly one on $M$.
Then the norm of the difference $(f-f(m_0))$ is strictly less than one everywhere on $M$.
\end{lem}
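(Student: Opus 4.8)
I would first reduce the statement to a clean inequality for a single function. Set $c\coloneqq f(m_0)$; since $|f|\equiv 1$, we have $|c|=1$. (To make sense of $f-f(m_0)$ as an honest function on $M$ one treats $c$ as a scalar, extending the base field to $\cH(m_0)$ if $m_0$ is not $k$-rational — this is harmless, and in the applications $m_0$ is $k$-rational.) Put $g\coloneqq f-c$, a holomorphic function on $M$ with $g(m_0)=0$. By the ultrametric inequality, $|g(x)|\le\max\{|f(x)|,|c|\}=1$ for every $x\in M$, so $|g|\le 1$ on $M$; the goal is to promote this to a strict inequality everywhere. The plan is a connectedness argument: since $|g|\le 1$ on $M$, the space $M$ is the disjoint union $\{|g|<1\}\sqcup\{|g|=1\}$, and it suffices to show that both pieces are open.

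Openness of $\{|g|<1\}$ is immediate from continuity of $|g|$, and this set is nonempty since it contains $m_0$. So the one substantive step is to show that $\{|g|=1\}$ is open as well. Given a point $x_1$ with $|g(x_1)|=1$, I would use that $M$ is without boundary — i.e.\ $\Int(M)=M$ — to find an affinoid neighborhood $V\subset M$ of $x_1$ with $x_1\in\Int(V)$. Restricting $g$ to $V$, we have $|g|\le 1$ on $V$ with the value $1$ attained at the interior point $x_1$, so $|g|$ attains its maximum over the $k$-affinoid space $V$ at $x_1\in\Int(V)$. Then \cref{lem:maximum_modulus} supplies a neighborhood $W$ of $x_1$ in $V$ (hence a neighborhood of $x_1$ in $M$) on which $|g|$ is constant, necessarily equal to $|g(x_1)|=1$. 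Thus $W\subset\{|g|=1\}$, so $\{|g|=1\}$ is open.

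Finally I would conclude: $M$ is connected and is covered by the two disjoint open sets $\{|g|<1\}$ and $\{|g|=1\}$, the first of which is nonempty, so the second must be empty — that is, $|f-f(m_0)|=|g|<1$ everywhere on $M$, as claimed. The main (and essentially only) obstacle is the openness of $\{|g|=1\}$: the point is to invoke the boundaryless hypothesis in exactly the right place, namely to realize $x_1$ as a point in the relative interior of an affinoid neighborhood so that the interior-maximum hypothesis of \cref{lem:maximum_modulus} is met; granting that lemma, everything else is formal.
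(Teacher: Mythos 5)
Your proof is correct and follows essentially the same route as the paper: the same ultrametric bound $|f-f(m_0)|\le 1$, the same use of the boundaryless hypothesis to place a point of $\{|f-f(m_0)|=1\}$ in the interior of an affinoid neighborhood so that \cref{lem:maximum_modulus} applies, and the same connectedness argument (the paper phrases it as $M\setminus M_1$ being open and closed rather than as a cover by two disjoint open sets, which is only a cosmetic difference). Your parenthetical remark about interpreting $f(m_0)$ as a scalar via $\cH(m_0)$ is a reasonable point of care that the paper leaves implicit.
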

\begin{proof}
Let
\[ M_1 = \big\{ m\in M \ \big|\ |f(m)-f(m_0)|<1 \big\}.\]
Since for any $m\in M$,
\[ | f(m)-f(m_0)|\leq\max\big\{ |f(m)|, |f(m_0)|\big\} = 1,\]
we have
\begin{equation}\label{eq:MminusM1}
M\setminus M_1 = \big\{ m\in M\ \big|\ |f(m)-f(m_0)| = 1\big\}.
\end{equation}

Assume by contradiction that $M\setminus M_1 \neq \emptyset$.
Let $m'\in M\setminus M_1$.
Since $M$ has no boundary, there exists an affinoid neighborhood $U$ of $m'$ in $M$ such that $m'$ is contained in the interior of $U$.
Consider the function $f(m)-f(m_0)$ depending on $m$.
We have $|f(m)-f(m_0)|\le 1$ for all $m\in U$ and it reaches maximum at the point $m'\in U$.
By \cref{lem:maximum_modulus}, there exists a neighborhood $V$ of $m'$ in $U$ such that $|f(m)-f(m_0)|=1$ for all $m\in V$.
In other words, we have $V\subset M\setminus M_1$.
Applying the argument above to every $m'\in M\setminus M_1$, we deduce that $M\setminus M_1$ is an open subset of $M$.

By \cref{eq:MminusM1}, $M\setminus M_1$ is also a closed subset of $M$.
Since $M$ is connected by assumption, we see that $M\setminus M_1=M$, in other words $M_1=\emptyset$.
This contradicts the fact that $m_0\in M$, completing the proof of the lemma.
\end{proof}

Let $L = (\Gamma_0,(v_{10},v_{20}),h_0\colon\Gamma_0\to B)$ be an extendable spine in the tropical base $B$.
Let $L\sw = (\Gamma_0,(v_{20},v_{10}),h_0\colon\Gamma_0\to B)$ be the same spine except that the vertices $v_{10}$ and $v_{20}$ are switched.
We have the following symmetry property of the counting of holomorphic cylinders.

\begin{thm}[Symmetry property]\label{thm:symmetry}
We have
\[ N(L,\beta) = N(L\sw,\beta),\]
for any class $\beta\in\NE(Y)$.
\end{thm}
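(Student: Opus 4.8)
The strategy is to reduce the symmetry to the assertion that the recipe defining $N(L,\beta)$ is insensitive to which of the two ends of the cylinder carries the third marked point, and then to transport the third marked point from one end to the other through a connected chain of affinoid charts. For $L\sw$ I would take every auxiliary choice to be the relabelling of the corresponding choice for $L$: the extended tropical cylinder in $\tB$ associated to $L\sw$ has the same image as the one associated to $L$, with only the two ends interchanged, so $A(Z\sw)=A(Z)$ and one may use the same $A_0$, the same neighbourhood $U$ (chosen to work with anchor points on either of the two rays), the same subdivision $C'_{\tB}$ with the same set $R$, the same pair of rays $\{r_1,r_2\}$, hence the same pair of divisors $\{\tD_1,\tD_2\}$ in $\tY$, the same set $\tbeta$ and the same $U_f$; the only differences are $\tD_1\leftrightarrow\tD_2$, $\tW_1\leftrightarrow\tW_2$ and the exchange of the first two marked points. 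The automorphism of $\bcM_{0,3}(\tY,\tbeta)$ interchanging the first two marked points preserves the virtual fundamental class, swaps $\ev_1$ with $\ev_2$, and fixes $\ev_3$; applying it identifies $N(L\sw,\beta)$ with the number produced by the same recipe as $N(L,\beta)$ except that the third marked point is constrained to lie in $\tW_2$ instead of $\tW_1$. Thus it suffices to prove that the degree of the $0$-cycle $V_p$ is the same when $p$ is a rigid point of $\tW_1$ and when it is a rigid point of $\tW_2$.

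To compare these, consider the path in $\Gamma$ joining the two unbounded vertices $v_1$ and $v_2$ of the extended tropical cylinder $Z=(\Gamma,(v_1,v_2),h)$; its image in $\tB$ avoids the singular locus $\R\times\{O\}$, since the underlying extended spine avoids the origin of $B$ (\cref{def:spine}). Along this path choose points $x_0=h(v_{10}),x_1,\dots,x_m=h(v_{20})$ and rational closed polyhedral boxes $W^{(0)}=W_1,\dots,W^{(m)}=W_2$ with $x_j$ in the interior of $W^{(j)}$, each satisfying the conditions imposed on $W_1$ in \cref{sec:hol_cylinders} (a product $\pi_1(W^{(j)})\times\pi_2(W^{(j)})$, contained in $U(R)_f$, disjoint from $\partial\tB'_f$, with $O\notin\pi_2(W^{(j)})$), and with $W^{(j)}\cap W^{(j+1)}\neq\emptyset$. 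Setting $\tW^{(j)}\coloneqq\ttau\inv(W^{(j)})$, one may take the boxes small enough that $\ttau$ is a trivial affinoid torus fibration over each of them, by \cref{prop:extension_of_torus_fibration}, so that $\tW^{(j)}$ is an affinoid domain whose spectrum $\Spec A^{(j)}$ is connected. Over each $\tW^{(j)}$ the proof of \cref{thm:properness_of_moduli} goes through: a $k$-analytic stable map in $\bcM_{0,3}(\tY\an,\tbeta,U_f)_0$ with third marked point in $\tW^{(j)}$ has image meeting $W^{(j)}$, and the curve-class conditions of \cref{def:class_beta} together with \cref{lem:maximum_modulus} and \cref{lem:function_of_norm_1}, applied to the two legs running out to $\tD_1$ and $\tD_2$, force its tropicalization to be an extended tropical cylinder passing through the fixed rays $h(e_1\setminus\{v_1\})$ and $h(e_2\setminus\{v_2\})$; hence \cref{prop:rigidity_of_tropical_cylinder} and \cref{lem:open_and_close} apply with the same anchor points as before, showing that $\bcM_{0,3}(\tY\an,\tbeta,U_f)_0\times_{\tY\an}\tW^{(j)}$ is a union of connected components of the stack $\bcM_{0,3}(\tY\an,\tbeta)_0\times_{\tY\an}\tW^{(j)}$, which is proper over $\tW^{(j)}$. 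By GAGA for $k$-analytic stacks, exactly as in \cref{sec:hol_cylinders}, the former is the analytification of an algebraic stack proper over $\Spec A^{(j)}$, and restricting the virtual fundamental class of $\bcM_{0,3}(\tY,\tbeta)_0$ yields over every rigid point a $0$-cycle whose degree is locally constant on the connected scheme $\Spec A^{(j)}$.

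It then remains to glue the chain. For a rigid point $p\in\tW^{(j)}\cap\tW^{(j+1)}$, the $0$-cycle obtained over $p$ from the $j$-th chart and the one obtained from the $(j{+}1)$-st chart are both the restriction of the single virtual class of $\bcM_{0,3}(\tY,\tbeta)_0$ to the fibre of $\ev_3$ over $p$, hence have the same degree; combined with the local constancy of the degree on each $\Spec A^{(j)}$ and with the independence of $N(L,\beta)$ of the chosen rigid point, this forces the degree attached to $\tW^{(0)}=\tW_1$ to equal the one attached to $\tW^{(m)}=\tW_2$, that is, $N(L,\beta)=N(L\sw,\beta)$.

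The step I expect to be the real obstacle is the claim, used in the second paragraph, that the rigidity mechanism of \cref{prop:rigidity_of_tropical_cylinder} and the open-and-closed argument of \cref{lem:open_and_close}, set up in \cref{sec:hol_cylinders} around the distinguished neighbourhood $W_1$ of the end $h(v_{10})$, remain valid verbatim when $W_1$ is replaced by an arbitrary box $W^{(j)}$ centred at an interior point of the cylinder. This is precisely where \cref{lem:maximum_modulus} and \cref{lem:function_of_norm_1} do their work: they guarantee that a stable map in the $U_f$-component whose image meets such a box is genuinely a cylinder whose two legs tropicalize to the prescribed rays, eliminating the potential multiplicity along $\tD_1$ and $\tD_2$, so that the rigidity proposition can be invoked with the same anchor points throughout the chain. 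Once this uniformity is established, the invariance of the virtual degree along the chain, and hence the theorem, follows formally.
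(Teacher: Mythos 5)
Your route is genuinely different from the paper's. You keep the $3$-pointed moduli space and try to transport the point constraint on $\ev_3$ from $\tW_1$ to $\tW_2$ through a chain of affinoid charts along the interior of the cylinder, using properness plus conservation of number over each chart and gluing on overlaps. The paper never moves the constraint through the interior: it passes to the $4$-pointed space $\bcM_{0,4}(\tY\an,\tbeta,U_f)_0$, constrains the two extra marked points near the two ends simultaneously via $\Psi=(\ev_3,\ev_4)$, and deforms this constraint multiplicatively inside the torus fibration (the families $F$ over the closed unit disc and $H$ over an annulus); \cref{lem:maximum_modulus} and \cref{lem:function_of_norm_1} are used there precisely to prove that the interpolation does not change tropicalizations (\cref{lem:invariance_of_norm}), and the two resulting degrees are identified with $N(L,\beta)$ and $N(L\sw,\beta)$ by the degree-one condition of \cref{def:class_beta} and the divisor axiom. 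Your first paragraph (making all auxiliary choices for $L\sw$ the relabelled choices for $L$ and swapping the first two marked points) is fine and matches what the paper implicitly does.

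The genuine gap is the step you yourself flag: the claim that \cref{prop:rigidity_of_tropical_cylinder}, \cref{lem:open_and_close} and \cref{thm:properness_of_moduli} remain valid when $W_1$ is replaced by a box $W^{(j)}$ centred at an interior point of the cylinder. As stated and proved, \cref{prop:rigidity_of_tropical_cylinder} anchors the competing cylinder at a point $p$ on the unbounded ray $h(e_1\setminus\{v_1\})$, and its proof propagates from the ends; the interior-anchor version you need is nowhere in the paper and would have to be proved (plausibly by the same propagation argument, but one must actually check that an extended tropical cylinder contained in $U$, with ends forced parallel and weights bounded by $A_0$, which merely meets a small interior box, is confined to $U(R)$, the residual freedom being the $\R$-translation together with the radial position of the vertices on their rays through $O$). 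Moreover, the tools you invoke to close this gap cannot do it: \cref{lem:maximum_modulus} and \cref{lem:function_of_norm_1} are statements about norms of holomorphic functions on analytic spaces, say nothing about tropicalizations of stable maps or multiplicities along $\tD_1,\tD_2$, and enter the paper only through \cref{lem:invariance_of_norm}. Until an interior-anchor rigidity statement is established, the open-and-closed argument over the charts $W^{(j)}$, hence the properness of $\ev_3$ over $\Spec A^{(j)}$ and the constancy of the virtual degree along your chain, is unsupported; this is exactly the difficulty the paper's proof is engineered to avoid by keeping the moving constraint inside the two end regions, where \cref{thm:properness_of_moduli} already applies. (A smaller point: the local constancy of the virtual degree over the connected $\Spec A^{(j)}$ is also only asserted, but the paper relies on the same principle over $\Spec A_1$, so I would not count that against you.)
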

\begin{proof}
We use the settings of \cref{sec:hol_cylinders}.
Let $\Psi$ be the map
\[\Psi\coloneqq (\ev_3,\ev_4)\colon\bcM_{0,4} (\tY\an,\tbeta,U_f)_0\longrightarrow\tY\an\times\tY\an,\]
where $\ev_3,\ev_4$ denote respectively the evaluation map of the third and the fourth marked point.

Let $W_1^\circ$ and $W_2^\circ$ denote the interior of $W_1$ and $W_2$ respectively.
Let $\tW_1^\circ\coloneqq\ttau\inv(W_1^\circ)\subset\tW_1$,
$\tW_2^\circ\coloneqq\ttau\inv(W_1^\circ)\subset\tW_2$.
Let
\begin{align*}
&\cM\coloneqq\bcM_{0,4} (\tY\an,\tbeta,U_f)_0\times_{\tY\an\times\tY\an} (\tW_1^\circ\times\tW_2^\circ),\\
&\cM(U_f)\coloneqq\cM\cap\bcM_{0,4}(\tY\an,\tbeta,U_f)_0.
\end{align*}
We have
\[\cM(U_f) \simeq \bcM_{0,4} (\tY\an,\tbeta,U_f)_0\times_{\tY\an\times\tY\an} (\tW_1^\circ\times\tW_2^\circ).\]

Recall that $\pi_1\colon\tB\to\R$ and $\pi_2\colon\tB\to B$ denote the two projections.
Let
\begin{align*}
\tW_1'&\coloneqq \tW_1^\circ \times\val\inv(\pi_1(W_2^\circ)),\\
\tW_2'&\coloneqq \val\inv(\pi_1(W_1^\circ))\times \tW_2^\circ.
\end{align*}
Let $p_1\colon \tW_1^\circ\to\val\inv(\pi_1(W_1^\circ))$ and $p_2\colon \tW_2^\circ\to\val\inv(\pi_1(W_2^\circ))$ denote the two projections.
Let
\begin{align*}
\Psi_1&\coloneqq (\id\times p_2)\circ\Psi\colon\cM(U_f)\to\tW_1',\\
\Psi_2&\coloneqq (p_1\times\id)\circ\Psi\colon \cM(U_f)\to\tW_2'.
\end{align*}

Up to shrinking the polyhedrons $W_1$ and $W_2$, we can embed $\tW'_1$ and $\tW'_2$ into $(\Gmk^4)\an$ so that
\begin{enumerate}[(i)]
\item there exists an open convex polyhedron $W\subset\R^4$ such that both $\tW'_1$ and $\tW'_2$ are isomorphic to the analytic domain $\tW\coloneqq(\val^4)\inv(W)\subset (\Gmk^4)\an$,
\item and that we have $\val^4\circ\Psi_1 = \val^4\circ\Psi_2$.
\end{enumerate}

Let $\cM_0$ be a connected component of $\cM(U_f)$.
Fix a rigid point $m_0\in\cM_0$.
Let $T$ denote the \kanal closed unit disc.
Consider the map
\begin{align*}
F\colon\cM_0\times T&\longrightarrow(\Gmk^4)\an\times T\\
(m,t)&\longmapsto\Big((1-t)\Psi_1(m) + t\cdot\frac{\Psi_1(m_0)}{\Psi_2(m_0)}\cdot\Psi_2(m),\ t\Big).
\end{align*}
Let $\widebar F$ be the composition of $F$ with the projection $(\Gmk^4)\an\times T\to(\Gmk^4)\an$.

\begin{lem}\label{lem:invariance_of_norm}
The following diagram commutes
\[
\begin{tikzcd}[column sep=6em]
\cM_0\times T\arrow{r}{\val^4\circ\,\widebar F} \arrow{d} &\R^4 \\
\cM_0\arrow{ru}[xshift=3.5em,yshift=-1em]{\val^4\circ\Psi_1}
\end{tikzcd}
\]
where the vertical arrow denotes the projection to the factor $\cM_0$ of the fiber product $\cM_0\times T$.
In other words, the composition $\val^4\circ\,\widebar F$ does not depend on $t\in T$.
\end{lem}
\begin{proof}
Let $g$ be the map
\begin{align*}
g\colon\cM_0&\longrightarrow(\Gmk^4)\an\\
m&\longmapsto\frac{\Psi_1(m)}{\Psi_2(m)}.
\end{align*}
We have $\val^4\circ\,g\equiv (0,\dots,0)\in\R^4$.

Let $G$ be the map
\begin{align*}
G\colon\cM_0\times T&\longrightarrow(\Gmk^4)\an\\
m&\longmapsto (1-t)g(m)+t g(m_0) = g(m) + t (g(m_0)-g(m)).
\end{align*}
By \cref{lem:function_of_norm_1}, we have $\val^4\circ\,(g(m_0)-g(m))\in\R_{>0}^4\subset\R^4$, for any $m\in\cM_0$.
Therefore, we have $\val\circ\,G\equiv(0,\dots,0)\in\R^4$ on $\cM_0\times T$.
Now the lemma follows from the equality $\widebar F=G\cdot\Psi_2$.
\end{proof}

Let $W'$ be a closed convex polyhedron in $W$.
Let $W''$ be a closed convex polyhedron in the interior of $W'$.
Let
\begin{align*}
&\tW'\coloneqq(\val^4)\inv(W')\subset(\Gmk^4)\an,\\
&\tW''\coloneqq(\val^4)\inv(W'')\subset(\Gmk^4)\an.\\
\end{align*}
Let
\begin{align*}
&\cM'_0\coloneqq(\val^4\circ\Psi_1)\inv(W'),\\
&\cM''_0\coloneqq(\val^4\circ\Psi_1)\inv(W'').
\end{align*}
By \cref{lem:invariance_of_norm}, the map $F$ induces two maps by restriction
\begin{align*}
&F'\colon\cM'_0\times T\to\tW'\times T,\\
&F''\colon\cM''_0\times T\to\tW''\times T.
\end{align*}

\begin{lem}\label{lem:properness_of_F}
The map $F''$ is a proper map.
\end{lem}
\begin{proof}
Since $\bcM_{0,4}(\tY,\tbeta)$ is a proper algebraic stack, its analytification $\bcM_{0,4}(\tY\an,\tbeta)$ is a proper $k$-analytic stack (cf.\ \cite[Proposition 6.4]{Porta_Yu_Higher_analytic_stacks_2014}).
So the closed substack $\bcM_{0,4}(\tY\an,\tbeta)_0$ is also proper.
By the definition of properness, there exists two finite affinoid quasi-smooth coverings $\{U_i\}_{i\in I}$ and $\{V_i\}_{i\in I}$ of $\bcM_{0,4}(\tY\an,\tbeta)_0$ such that $U_i\subset\Int(V_i)$ for every $i\in I$.

By \cref{lem:open_and_close}, $\cM(U_f)$ is a union of connected components of $\cM$.
Therefore, by base change, the coverings $\{U_i\}_{i\in I}$ and $\{V_i\}_{i\in I}$ of $\bcM_{0,4}(\tY\an,\tbeta)_0$ induce a finite affinoid quasi-smooth covering $\{U'_i\}_{i\in I'}$ of $\cM''_0\times T$ and a finite affinoid quasi-smooth covering $\{V'_i\}_{i\in I'}$ of $\cM'_0\times T$ such that $U'_i\subset\Int(V'_i/T)$.
Let $V''_i$ denote the restriction of $V'_i$ to $\cM''_0\times T$.
Then $\{V''_i\}_{i\in I}$ is a finite quasi-smooth covering of $\cM''_0\times T$ and we have $U'_i\subset\Int\big((V''_i/(\tW''\times T)\big)$.
So we have proved that $F''$ is a proper map.
\end{proof}

Let $w$ be a rigid point in $\tW''$.
\cref{lem:properness_of_F} implies that the degree of the virtual fundamental class on $(F'')\inv(w,0)$ equals the degree of the virtual fundamental class on $(F'')\inv(w,1)$.

Let $T_1$ denote the \kanal annulus $\val\inv(1)\subset\Gmk\an$.
Consider the map
\begin{align*}
H\colon\cM''_0\times T_1&\longrightarrow\tW''\times T_1\\
(m,t)&\longmapsto\big(t\cdot\Psi_2(m),t\big).
\end{align*}
The same proof of \cref{lem:properness_of_F} shows that $H$ is a proper map.
Therefore, the degree of the virtual line bundle on $H\inv(w,1)$ equals the degree of the virtual fundamental class on $H\inv\big(w,\frac{\Psi_1(m_0)}{\Psi_2(m_0)}\big)$.

Summing over every connected component $\cM_0$ of $\cM(U_f)$, we deduce that the degree of the virtual fundamental class on $\Psi_1\inv(w)$ equals the degree of the virtual fundamental class on $\Psi_2\inv(w)$.
By \cref{def:class_beta}(\ref{item:class_beta:degree_one}), using the divisor axiom of Gromov-Witten invariants (cf.\ \cite[\S 2.2.4]{Kontsevich_Gromov-Witten_1994}),
we observe that the degree of the virtual fundamental class on $\Psi_1\inv(w)$ equals the number $N(L,\beta)$, and the degree of the virtual fundamental class on $\Psi_2\inv(w)$ equals the number $N(L\sw,\beta)$.
So we have proved our theorem.
\end{proof}

\section{The example of a del Pezzo surface} \label{sec:example_del_Pezzo}

In this section, we compute the number of holomorphic cylinders for a concrete del Pezzo surface.
The result consists of certain binomial coefficients,
which verifies the Kontsevich-Soibelman wall-crossing formula for the focus-focus singularity (cf.\ \cref{rem:focus-focus}).

Let $Y_0 \coloneqq \bbP^1_\C \times \bbP^1_\C$.
Let $x$ be a coordinate on the first component of $\bbP^1_\C$ and let $y$ be a coordinate on the second component of $\bbP^1_\C$.
Let $D_{00}, D_{10}, D_{20}, D_{30}$ be the divisors on $Y_0$ given respectively by the equations $x=0, y=0, x=\infty, y=\infty$.
Let $Y$ be the blowup of $Y_0$ at the point with coordinate $(-1,0)$.
Then $Y$ is a del Pezzo surface of degree 7.
Let $D_0, D_1, D_2, D_3$ be the strict transforms of the divisors $D_{00}, D_{10}, D_{20}, D_{30}$ respectively.
Let $D=D_1+D_2+D_3+D_4$.
We see that $D$ is an anti-canonical cycle of rational curves in $Y$.
So $(Y,D)$ is a Looijenga pair in the sense of \cref{def:Looijenga pair}.

Using the notations of \cref{sec:log-CY_surfaces}, the tropical base $B$ associated to the pair $(Y,D)$ consists of four cones $\Delta_{0,1}, \Delta_{1,2}, \Delta_{2,3}, \Delta_{3,0}$.
Let $B\xrightarrow{\sim}\R^2$ be the homeomorphism which is linear on each of the four cones, and which maps the four cones subsequently to the first, second, third and fourth quadrant of $\R^2$.
We use this homeomorphism to identify $B$ with $\R^2$.
We remark that the \Zaffine structure on $\Delta_{i,i+1}$ is isomorphic to the restriction of the standard \Zaffine structure on $\R^2$ via the identification.
So we will describe the integral tangent vectors on $\Delta_{i,i+1}$ using the coordinates with respect to the standard basis of $\Z^2\subset\R^2$.

Let $l\in\Z_{>0}$ and $m,n\in\Z$.
Let $L(l,m,n)=(\Gamma,(v_1,v_2),h\colon\Gamma\to B\simeq\R^2)$ be a spine in the tropical base $B$ satisfying the following conditions.
\begin{enumerate}[(i)]
\item The \Zaffine tree $\Gamma$ has three vertices $v_0$, $v_1$, $v_2$ and two edges $e_1, e_2$.
The edge $e_1$ connects $v_0$ with $v_1$;
the edge $e_2$ connects $v_0$ with $v_2$.
\item We assume that $h(v_0)$ has coordinates $(0,b)$ with $b>0$.
\item We assume that $w_{v_0} (e_1) = (-l,-m+n)$ and $w_{v_0} (e_2) = (l,m)$.
\end{enumerate}

\begin{thm}\label{thm:cylinders_example}
The number of holomorphic cylinders associated to the spine $L(l,m,n)$ equals the binomial coefficient $\binom{l}{n}=\frac{l!}{n! (l-n)!}$.
In other words, we have
\[\sum_{\beta\in\NE(Y)} N(L(l,m,n),\beta)) = \binom{l}{n}.\]
\end{thm}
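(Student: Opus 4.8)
The plan is to unwind, for this particular Looijenga pair $(Y,D)$, the whole chain of constructions of \cref{sec:tropical_cylinders} and \cref{sec:hol_cylinders} that produces $N(L(l,m,n),\beta)$, and then to carry out the resulting enumerative computation explicitly. First I would pin down the tropical combinatorics: one extends the edges $e_1,e_2$ of $L(l,m,n)$ past $v_1,v_2$ to infinity inside $B\simeq\R^2$, records through which codimension-one walls $\Delta_i$ the two rays pass, and reads off the extension class $\beta'$ and hence $\hbeta=\beta+\beta'$. Since $\sigma_{v_0}=w_{v_0}(e_1)+w_{v_0}(e_2)=(0,n)$, the extended tropical cylinder $Z$ in $\tB$ associated to the extended spine $\hL$ acquires, at the central vertex $v_0$, exactly one extra leg of weight $n$ running to the origin $O$; so $Z$ is a ``line'' with a single $n$-fold kink at the focus-focus point of $B$. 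From this data one computes $A(Z)$, fixes $A_0>A(Z)$, and obtains the neighbourhood $U$ of \cref{prop:rigidity_of_tropical_cylinder}, the regular simplicial subdivision $C'_{\tB}$ of \cref{rem:rigidity_polyhedral}, hence the threefold $\tY$, the divisors $\tD_1,\tD_2$, and the finite set $\tbeta\subset\NE(\tY)$ of classes representing $\hbeta$.

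Next I would reduce the virtual count to a curve count inside $Y$. Because $\tX=\Gm\times(Y\setminus D)$ and, by \cref{def:class_beta}(\ref{item:class_beta:degree_one}), a representative of any $\gamma\in\tbeta$ has degree one over $\bbP^1_k$, every stable map contributing to $N(L(l,m,n),\beta)$ is, up to the $\Gm$-translation fixed by the marked points, a rational curve in a toroidal modification of $Y$ meeting the boundary with the multiplicities dictated by $Z$. Using the properness and GAGA statements of \cref{sec:hol_cylinders}, one replaces the non-archimedean moduli stack by the algebraic stack $\bcM_{0,3}(\tY\an,\tbeta,U_f,W_1)_0\alg$ over $\Spec A_1$, and by deformation invariance one may first place $L(l,m,n)$ in a convenient position. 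The outcome is that $\sum_{\beta}N(L(l,m,n),\beta)$ equals a genus-zero logarithmic Gromov--Witten invariant of $(Y,D)$ of the type occurring in log Calabi--Yau mirror symmetry: the virtual count of rational curves in $Y$ that meet $D$ along a single component at one point with tangency order $l$, meet the exceptional curve of $Y\to Y_0$ (equivalently, pass through the blown-up point $(-1,0)$) with multiplicity $n$, and pass through the two fixed points at the ends of the cylinder.

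Then I would compute this invariant by hand. Since $(Y,D)$ is $\bbP^1_\C\times\bbP^1_\C$ blown up at a single point of its toric boundary, the relevant rational curves admit an explicit rational parametrization: writing the image in $Y_0=\bbP^1_\C\times\bbP^1_\C$ in coordinates $(x,y)$, the $n$-fold leg at $O$ translates into the curve meeting the exceptional curve with multiplicity $n$, which in the parametrization imposes a zero of order $n$ on $x+1$ at the relevant parameter, the complementary order $l-n$ being the tangency with the strict transform of $\{y=0\}$. Imposing the three incidence conditions from the marked points then cuts the moduli space down to dimension zero, and one computes the virtual degree of the zero-dimensional cycle of \cref{lem:virtual_dim}: the answer is the number of ways to split the tangency order $l$ between the exceptional curve (order $n$) and the strict transform (order $l-n$), namely $\binom{l}{n}$. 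As a consistency check, $\binom{l}{n}$ is the coefficient of $z^n$ in $(1+z)^l$, which is exactly the expansion of the elementary Kontsevich--Soibelman wall-crossing automorphism attached to the focus-focus singularity of $B$ (see \cref{rem:focus-focus} and \cite{Kontsevich_Affine_2006}); the same value can also be extracted from the tropical-vertex relative invariants of \cite{Gross_Tropical_vertex_2010} for a single interior blow-up.

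The main obstacle is not a single deep input but the bookkeeping throughout: correctly determining the extension of the spine and the class $\hbeta$, then $\tbeta$ under the toroidal modification while keeping track of the multiplicities introduced by the subdivision, and --- most delicately --- checking that the moduli space cut out by the regularity and incidence conditions is exactly the expected finite set of reduced, unobstructed points, so that the virtual fundamental class genuinely computes the naive enumeration. Establishing reducedness and the absence of obstructions for these explicit rational curves, and matching orientation and sign conventions with the wall-crossing formula, is where the real work lies.
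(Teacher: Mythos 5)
Your overall plan has the right flavor at the very end (an explicit rational parametrization on this particular surface, with the answer coming from distributing the zeros of $x+1$), but the route you take to get there has a genuine gap in the middle. You assert that $\sum_\beta N(L(l,m,n),\beta)$ equals a genus-zero logarithmic Gromov--Witten invariant of $(Y,D)$ with prescribed tangency $l$ along a boundary component, multiplicity $n$ along the exceptional curve, and two point conditions. No such comparison is available: $N(L,\beta)$ is defined via the analytic moduli space over the threefold $\tY$ cut out by the retraction condition $U_f$, and the whole point of the extra $\Gm$-factor is to \emph{avoid} relative/log Gromov--Witten machinery (\cref{rem:advantage_of_tilde}); proving the identification you invoke would be a separate, substantial theorem, not a step one can wave through. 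Moreover, the step you explicitly defer as ``where the real work lies'' --- that the relevant moduli space is a finite set of reduced, unobstructed points --- is precisely the heart of the argument, and it is not settled by an obstruction-theory check on log curves.

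The paper closes this gap by an elementary but decisive observation that your proposal is missing: since $X=Y\setminus D$ is affine (it is the surface $vy=x+1$ in $\C^\times\times\C\times\C$), every stable map contributing to $\bcM_{0,3}(\tY\an,\tbeta,U_f)_0\times_{\tY\an}\{p\}$ has irreducible domain, and a $3$-pointed rational curve has no moduli and no automorphisms; hence the fiber over the rigid point $p$ is literally the set of morphisms $\bbG_{\mathrm m/\ok}\to X_{\ok}$ with the prescribed tropicalization, i.e.\ triples of Laurent polynomials $(\ox,\oy,\ov)$ with $\ox$ invertible and $\ov\oy=\ox+1$. Normalizing $\ox=s^l$, factoring $1+s^l=(1+b_1s)\cdots(1+b_ls)$, and distributing the factors between $\oy$ and $\ov$ (the scalar $c$ being pinned down by $p\in\tW_1$) exhibits this fiber as $\binom{l}{n}$ reduced points, so no virtual-class subtleties remain. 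Note also that your description of the mechanism is off: the curves meet the exceptional locus transversally at a subset of the $l$ \emph{distinct simple} zeros of $x+1$, and the binomial coefficient counts the choice of which of those zeros are assigned to $\ov$ (the tropical leg of weight $n$ at $O$ only fixes the total), not ``a zero of order $n$ at the relevant parameter''; the number of ways to split a tangency order $l$ into $n+(l-n)$ at a single point would be $1$, not $\binom{l}{n}$.
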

\begin{proof}
Since the surface $Y$ is the blowup of $Y_0$ at the point $(-1,0)$,
the interior $X\coloneqq Y\setminus D$ contains two piece of $(\C^\times)^2$.
Let $(x,u)$ be the coordinates on the first piece, with $y=u(x+1)$.
Let $(y,v)$ be the coordinates on the second piece, with $vy=x+1$.
The functions $x,y,v$ give an embedding of $X$ into $\C^\times\times\C\times\C$.
So $X$ is isomorphic to the surface in $\C^\times\times\C\times\C$ given by the equation $vy=x+1$.

Let $\widebar k$ be an algebraic closure of $k$ and let $X_{\widebar k}=X\times_{\Spec\C}\Spec\widebar k$.
Let $\bbG_{\mathrm m/\widebar k} = \Spec \widebar k [s,s\inv]$.
A morphism from $\bbG_{\mathrm m/\widebar k}$ to $X_{\widebar k}$ is given by three Laurent polynomials $\ox, \oy, \ov$ in $s$, such that $\ox$ is invertible and that the relation $\ov\oy=\ox+1$ holds.

Since $\ox$ is invertible, up to a change of coordinate of $s$, we can assume that $\ox = s^l$ for a nonnegative integer $l$.
Then we have $\ov\oy=1+s^l$.
Let us factorize
\[ 1+ s^l = (1+ b_1 s) ( 1+ b_2 s) \dots (1+b_l s),\]
where $b_1,\dots,b_l \in\widebar k$.
Let $J\subset\{1,\dots,l\}$ be a subset of cardinality $n$, and let $J^\mathrm{c}$ be its complement.
Let $c\in\widebar k$ with valuation greater than 0.
Let
\begin{align*}
\ox &= s^l,\\
\oy &= c\cdot s^m \prod_{j'\in J^{\mathrm c}} (1+b_{j'} s),\\
\ov &= c\inv\cdot s^{-m} \prod_{j\in J} (1+b_j s).
\end{align*} 

Let $f\colon\bbG_{\mathrm m/\widebar k}\to X_{\widebar k}$ be the morphism given by the functions $\ox,\oy,\ov$ above.
The composition of the retraction map $\tau\colon X_k\an\to B$ with the piecewise linear identification $B\xrightarrow{\sim}\R^2$ is given by
\[
\begin{cases}
\big(\val x, \; \min (0,\val y)\big) &\text{ when }\val x=0, \val v\ge 0,\\
\big(\val x, \; -\val v\big) &\text{ otherwise.}
\end{cases}
\]
Therefore, when $l>0$, the morphism $f$ gives rise to an extended spine in $B$ associated to a spine of the form $L(l,m,n)$.
Using the notations in \cref{sec:hol_cylinders}, the domain curves of the stable maps in the moduli stack $\bcM_{0,3} (\tY\an,\tbeta,U_f)_0\times_{\tY\an} \{p\}$ are all irreducible,
because the variety $X$ is affine in our example.
Moreover, since a projective line with 3 marked points has neither moduli parameter nor nontrivial automorphisms,
the stable maps in $\bcM_{0,3} (\tY\an,\tbeta,U_f)_0\times_{\tY\an} \{p\}$ correspond to morphisms $f\colon\bbG_{\mathrm m/\widebar k}\to X_{\widebar k}$ of the forms considered above.
The choice of the constant $c$ is uniquely determined by the choice of the rigid point $p\in\tW_1$.
So it remains a finite number of choices for the functions $\ox,\oy,\ov$, which correspond to the choice of the subset $J\subset \{1,\dots,l\}$ of cardinality $n$.

We conclude that
\[\coprod_{\beta\in\NE(Y)} \bcM_{0,3} (\tY\an,\tbeta,U_f)_0\times_{\tY\an} \{p\}\]
is a disjoint union of $\binom{l}{n}$ reduced points.
In other words, we have proved that
\[\sum_{\beta\in\NE(Y)} N(L,\beta) = \binom{l}{n}.\]
\end{proof}

\begin{rem} \label{rem:focus-focus}
The binomial coefficients in \cref{thm:cylinders_example} are related to the Kontsevich-Soibelman wall-crossing transformation around a focus-focus singularity.
We recall that the wall-crossing transformation around a two-dimensional focus-focus singularity is an automorphism $\varphi$ of the algebra $\C\llb x,y\rrb$ given by
\[\varphi(x) = x(1+y),\quad \varphi(y)=y.\]
The reference is \cite[\S 8]{Kontsevich_Affine_2006}, see also the generalizations by Gross and Siebert \cite{Gross_Real_Affine_2011,Gross_Invitation_2011}.
We compute that
\[\varphi(x^l y^m) = x^l (1+y)^l y^m = \sum_n \binom{l}{n}\, x^l y^{m+n}.\]
In other words, we obtain the following identity
\[\varphi(x^l y^m) = \sum_n \sum_{\beta\in\NE(Y)} N(L(l,m,n),\beta)\; x^l y^{m+n}.\]

Deeper relations between the enumeration of cylinders and the wall-crossing structures will be explored in a subsequent paper. 
\end{rem}

\bibliographystyle{plain}
\bibliography{dahema}

\end{document}